\numberwithin{equation}{section}
\theoremstyle{plain}
\newtheorem{theorem}[equation]{Theorem}
\newtheorem{lemma}[equation]{Lemma}
\newtheorem{proposition}[equation]{Proposition}
\newtheorem{corollary}[equation]{Corollary}
\theoremstyle{definition}
\newtheorem{definition}[equation]{Definition}
\theoremstyle{remark}
 \newtheorem{remark}[equation]{Remark}
 \theoremstyle{remark}
 \newtheorem{observation}[equation]{Observation}
\newtheorem{example}[equation]{Example}
\newcommand*{\Cont}{\mathrm C}
\newcommand*{\Contc}{\mathrm{C_c}}
\newcommand{\Index}{\Lambda}
\newcommand*{\Cc}{\mathrm{C_c}}
\newcommand{\Contz}{\mathrm{C_0}}
\newcommand{\base}[1][G]{{#1}^{(0)}}
\newcommand{\supp}{{\textup{supp}}}
\newcommand*{\Hilm}[1][H]{\mathcal #1}
\newcommand*{\Hils}[1][H]{\mathcal #1}
\newcommand*{\Mult}{\mathcal M}
\newcommand{\Bound}{\mathbb B}
\newcommand*{\Cst}{\mathrm{C}^*}
\newcommand{\A}{\mathcal{A}}
\newcommand{\B}{\mathcal{B}}
\newcommand*{\inpro}[2]{\langle#1, #2\rangle}
\newcommand*{\binpro}[2]{\bigl\langle#1, #2\bigr\rangle}
\newcommand*{\Binpro}[2]{\Bigl\langle#1, #2\Bigr\rangle}
\newcommand{\norm}[1]{\lvert\!\lvert #1\rvert\!\rvert}
\newcommand{\C}{\mathbb{C}}
\newcommand{\R}{\mathbb{R}}
\newcommand{\N}{\mathbb{N}}
\newcommand{\iso}{\simeq}
\newcommand*{\Star}{$^*$}
\newcommand*{\nb}{\nobreakdash}
\newcommand*{\defeq}{\mathrel{\vcentcolon=}}
\newcommand{\inverse}{^{-1}}
\newcommand{\etale}{{\'e}tale}
\title[Fell bundle
\(\mathrm{C}^*\)-algebras]{\(\mathrm{C}^*\)-algebras of Fell bundles
  over étale groupoids}
\author{Rohit Dilip Holkar} \email{rohit.d.holkar@gmail.com}
\author{Md Amir Hossain}
\email{mdamirhossain18@gmail.com}
\address{Department
	of Mathematics, Indian Institute of Science Education and Research
	Bhopal, Bhopal Bypass Road, Bhauri, Bhopal 462 066, Madhya Pradesh,
	India.}
      \keywords{Representations; étale groupoids; Fell bundles; \(\Cst\)-algebras.}  
      \thanks{\emph{Subject class.} 47L55, 46L55,
        22A22.}
\begin{document}
      \begin{abstract}
        We describe a construction for the full
        \(\mathrm{C}^*\)-algebra of a possibly unsaturated Fell
        bundle over a possibly non-Hausdorff locally compact étale
        groupoid without appealing to Renault's disintegration
        theorem. This construction generalises the standard
        construction given by Muhly and Williams.
      \end{abstract}

      \maketitle

      \section*{Introduction}
      \label{sec:intro}

      The \(\Cst\)\nb-algebra of a locally compact groupoid was
      defined by Renault~\cite{Renault1980Gpd-Cst-Alg} as the
      completion of \(\Cc(G)\) with respect to the full norm. The full
      norm of \(f\in \Cc(G)\) is given by supremum of
      \(\norm{\pi(f)}\) over all \(I\)\nb-norm bounded representation
      \(\pi\) of \(\Cc(G)\). Exel
      in~\cite{Exel2008Inverse-semigroup-combinotorial-Cst-alg}
      introduced a new approach to defining the \(\Cst\)\nb-algebras
      of \(\etale\) groupoid and proved that the algebra he defined is
      same as that of Renault's. Exel worked with \emph{non-Hausdorff}
      \etale\ groupoids.

      In~\cite{Sims-Szabo-Williams2020Operator-alg-book}
      and~\cite{Sims2017Etale-gpd}, Sims prove that for an \etale\
      groupoid any representation of \(\Cc(G)\) is \(I\)\nb-norm
      bounded and continuous in inductive limit topology. Clark and
      Zimmerman~\cite{Clark-Zimmerman2022A-steinberg-Appro-to-etale-gpd-alg}
      proved similar results as Sims for second countable groupoids
      which can be, possibly, \emph{non-Hausdorff}.

      Fell bundles over locally compact groupoids seems to be first
      introduced by
      Yamagami~\cite{Yamagami1990On-primitive-ideal-spac-gpd} and
      Kumjian~\cite{Kumjian1998Fell-bundles-over-gpd}. The Fell
      bundles over groupoids is a natural generalisation of
      \(\Cst\)\nb-dynamical system. One may construct Fell bundles
      from group actions on
      \(\Cst\)\nb-algebras~\cite{Exel1997Twisted-Partial-Acts-Fell-Bundle}. Conversely,
      Exel~\cite{Exel1997Twisted-Partial-Acts-Fell-Bundle} shows that
      a certain class of Fell bundles essentially corresponds to
      twisted partial actions of locally compact groups on
      \(\Cst\)\nb-algebras.  As a consequence, the
      \(\Cst\)\nb-algebras of \emph{nice} Fell bundles constitute a
      large class of \(\Cst\)\nb-algebras associated with
      \(\Cst\)\nb-dynamical systems.

      Turning our attention to \(\Cst\)\nb-algebras of Fell bundles,
      the \(\Cst\)\nb-algebra of a Fell bundle is typically defined as
      the completion of \(\Cc(G;\A)\) with respect to the full norm;
      the existence of this full norm needs a justification. Muhly and
      Williams~\cite{Muhly-Williams2008FellBundle-ME} offer an
      exhaustive construction of the (full) \(\Cst\)\nb-algebra of a
      Fell bundle over a \emph{locally compact groupoid with a Haar
        system}. Kumjian~\cite{Kumjian1998Fell-bundles-over-gpd}
      describes the reduced \(\Cst\)\nb-algebra of a Fell bundle.

      We indent to encounter a technical issue in the construction of
      the full \(\Cst\)\nb-algebra of a Fell bundle. The proof of
      Renault's disintegration
      in~\cite{Muhly-Williams2008FellBundle-ME} uses the fact that the
      Fell bundle is \emph{saturated}; the Hausdorffness of the
      underlying groupoid is also used in the proof. On the other
      hand, the partial actions of groupoids provide a large class of
      \emph{unsaturated} Fell bundles~\cite{Exel2017PDS-Fell-bundles,
        Exel1997Twisted-Partial-Acts-Fell-Bundle,
        Anantharaman-D:2020Partial-Action-of-Gpd}. We propose a method
      to construct the \(\Cst\)\nb-algebras of these
      \emph{unsaturated} Fell bundles over \emph{\etale}
      groupoids. This construction basically uses the convolution
      \Star-algebra of the groupoid which can be defined without
      bothering about the saturation of the Fell bundle. Even the
      proofs involved are algebraic in nature. Therefore, the
      saturation of the Fell bundle does not come into the picture
      while construction the \(\Cst\)\nb-algebra of the Fell
      bundle. In fact, since the convolution algebra can defined for a
      nonHausdorff groupoid, the proposed construction works for
      unsaturated Fell bundles over non\nb-Hausdorff \etale{} groupoids.
     
      For a Fell bundle \(p\colon \A \to G\) over an \etale\ groupoid,
      one can consider four types of representations
      of~\(\Cont(G;\A)\) on a Hilbert space: \Star-representations (no
      continuity condition imposed), \Star-representations continuous
      in the inductive limit topology or the \(I\)\nb-norm, and the
      pre-representations; see page~\pageref{it:alg-rep} for
      details. The ideas for proving that the classes of first three
      representations are are same, are coming from the construction
      of the \(\Cst\)\nb-algebra of the full \(\Cst\)\nb-algebra of an
      \etale\ groupoid introduced by
      Exel\cite{Exel2008Inverse-semigroup-combinotorial-Cst-alg},
      Sims~\cite{Sims-Szabo-Williams2020Operator-alg-book} and
      Clark--Zimmerman~\cite{Clark-Zimmerman2022A-steinberg-Appro-to-etale-gpd-alg};
      the motivation of this article comes from these works . Showing
      that the fourth representations are equivalent to any of the
      earlier ones requires more work. We do not use the
      Disintegration theorem. We show that our construction of
      \(\Cst(G;\A)\) is equivalent to that of
      Muhly--Williams'~\cite{Muhly-Williams2008FellBundle-ME} when the
      Fell bundle is saturated and underlying locally compact groupoid
      is Hausdorff and second countable.

      \medskip

      \paragraph{\itshape Structure of the article:}

      Section~\ref{sec:prelim} establishes basic ideas and definitions
      about groupoids, bundle of \(\Cst\)\nb-algebras and Fell bundles
      over groupoids.

      Section~\ref{sec:rep-of-Cc-G-A}, discusses the four notions of
      representations of~\(\Cc(G;\A)\). The equivalence of the first
      three norms is discussed in Section~\ref{sec:pre-rep}.
      Section~\ref{sec:equiv-three-repr} contains the proof that
      pre-represetations are also equivalent to the other three;
      Theorem~\ref{thm:norm-equi} summarises the main results. Many
      remarks after this theorem are given which point out certain
      properties of~\(\Cst(G; \A)\). Examples are discussed at the end.

      \medskip

\paragraph{\itshape Acknowledgement:}
This work was supported by SERB's grant MTR/2020/000198 of the first
author and the CSIR grant~09/1020(0159)/2019-EMR-I of the second
one. We are thankful to the funding institutions.

\medskip

\section{Preliminaries}
\label{sec:prelim}

We assume that the reader is familiar with locally compact \etale\
groupoids and construction of their \(\Cst\)\nb-algebras.  Our
references for locally compact groupoids and their
\(\Cst\)\nb-algebras
are~\cite{Exel2008Inverse-semigroup-combinotorial-Cst-alg,
  Renault1980Gpd-Cst-Alg, Sims-Szabo-Williams2020Operator-alg-book}.

The \emph{spaces} considered in this article are locally compact and
Hausdorff, and groupoids are mostly locally compact (but not
necessarily Hausdorff). For the sake of clarification, we call a space
\emph{locally compact} if every point of the space has a Hausdorff
neighbourhood with the property that every open cover of the
neighbourhood admits a finite subcover.

For a groupoid~\(G\), we denote its space of units by \(G^{(0)}\); the
range and the source maps by~\(r\) and~\(s\), respectively; and the
set of composable pairs,
\(\{(\gamma,\eta)\colon G\times G : s(\gamma)=r(\eta)\}\), by
\(G^{(2)}\). For \(\gamma\in G\), \(\gamma\inverse\) denotes its
inverse. The groupoid~\(G\) is called topological if it has a topology
that makes the range, source, inversion and multiplication maps
continuous when \(\base\subseteq G\) and
\(G^{(2)}\subseteq G\times G\) are given subspace topologies.

\subsection{\'Etale\ groupoids}
\label{sec:etale-groupoids}

A topological groupoids~\(G\) is called \emph{{\etale}} if its space
of units~\(\base\) is locally compact Hausdorff and the range map
(equivalently, the source map) \(G\to \base\) is a local homeomorphism
in the subspace topology~\(\base \subseteq G\). Thus
\(\base\subseteq G\) is open with the subspace topology.

Assume that~\(G\) is an \etale\ groupoid. Then the space of units
\(\base \subseteq G\) is an open subspace~\cite[Proposition
3.2]{Exel2008Inverse-semigroup-combinotorial-Cst-alg}. The space of
units is closed in~\(G\) \emph{iff} \(G\) is Hausdorff~\cite[Lemma
8.3.2]{Sims-Szabo-Williams2020Operator-alg-book}.  Since the groupoid
is locally homeomorphic to its space of units, which is locally
compact Hausdorff, the groupoid itself is locally compact hence
locally Hausdorff. In fact, every open Hausdorff subspace of~\(G\) is
locally compact~\cite[Proposition
3.7]{Exel2008Inverse-semigroup-combinotorial-Cst-alg}.

Since~\(G\) is not Hausdorff in general, \(\Contc(G)\) is defined in
the sense of
Connes~\cite{Connes1982A-Survey-of-folliantion-and-OP-Alg}. Thus
\(\Contc(G)\) is span of those functions~\(f\) on~\(G\) which are
supported on a compact set in an open Hausdorff subset~\(V\) of~\(G\)
and~\(f|_V\) is continuous. Thus, the functions in~\(\Contc(G)\) need
not be continuous. Nonetheless, \(\Contc(G)\) is an involutive
algebra.

A particular type of basic open sets, called bisections, in an \etale\
groupoid prove very useful in computation. An open subset
\(U\subseteq G\) is called a \emph{bisection} if the restrictions of
the range and source maps to~\(U\) are homeomrphisms on open subsets
of~\(\base\).  Clearly, every bisection is locally compact Hausdorff
subspace of~\(G\). And the topology of \etale\ groupoid~\(G\) has a
basis consisting of bisections; as a consequence \(\Contc(G)\) can be
realised as a spanned of \(\Contc(U)\) for
bisections~\(U\)~\cite[Proposition
3.10]{Exel2008Inverse-semigroup-combinotorial-Cst-alg}. We denote the
basis consisting of bisections by~\(G^{\mathrm{op}}\).

Continuing the last discussion, since \(r\) is a local homeomorphism,
it follows that \(G^x\) is discrete for all \(x\in G^{(0)}\). Let
\(\lambda_x\) be a counting measure on \(G^x\). The family of measure
\(\{\lambda_x\}_{x\in G^{0}}\) constitutes a Haar system for the
{\'e}tale groupoid \(G\),
see~\cite[Defintion~I.2.2]{Renault1980Gpd-Cst-Alg}.

\subsection{$\Cst$-algebras and Hilbert modules}
\label{sec:cst-algebras-hilbert}

Our references for \(\Cst\)\nb-algebras
are~Davidson's~\cite{Davidson1996Cst-by-Examples} book and that of
Murhpy's~\cite{Murphy-Book}; for Hilbert modules we refer to Lance's
monograph~\cite{Lance1995Hilbert-modules}.

Let \(A\) be a \(\Cst\)\nb-algebra. By a (right) Hilbert module
over~\(A\) we mean a Banach space~\(\Hilm\) that is a right
\(A\)\nb-module equipped with an \(A\)\nb-valued \emph{inner product}
\(\inpro{}{}_A\). This inner product is an \(A\)\nb-valued
\emph{positive definite sesquilinear form} on~\(\Hilm\) that is linear
in the second variable and antilinear in the first one. The positive
definiteness has the standard \(\Cst\)\nb-algebraic meaning here,
namely, \(\inpro{a}{a}\in A\) is positive and equals~\(0\) \emph{iff}
\(a=0\). It is assumed that the Banach norm on~\(\Hilm\) is given
by~\(a\mapsto \norm{\inpro{a}{a}}^{1/2}\) where~\(a\in \Hilm\). A left
Hilbert \(A\)\nb-module is defined similarly. We reserve the name
\emph{Hilbert} module for a \emph{right} Hilbert module; a left one
shall be explicitly pointed out.

\begin{example}
  Let \(A\) be a \(\Cst\)\nb-algebra and \(I\subseteq A\) a closed
  ideal. Then \(\inpro{a}{b}_A \defeq a^*b\) and
  \(_A\inpro{a}{b} \defeq ab^*\), respectively, define left and right
  \(A\)\nb-valued inner products on~\(I\). In fact they make~\(I\)
  \(A\)\nb-module. With these inner products \(I\) also becomes
  Hilbert \(A\)-\(A\)\nb-module. This Hilbert module is not full
  unless \(I=A\).
\end{example}

\begin{lemma}[{\cite[Page 5]{Lance1995Hilbert-modules}}]
  \label{lem:appx-unit-for-full-Hilbert-mod}
  Assume that \(X\) is a Hilbert \(A\)\nb-module and \(I\subseteq A\)
  the closed ideal generated by \(\inpro{X}{X}\). Let \((u_i)_{i}\) be
  an approximate identity of~\(I\). Then \(\norm{ x u_i- x }\to 0\)
  for \(x\in X\).
\end{lemma}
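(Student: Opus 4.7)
The plan is to use the defining property of the Hilbert module norm, namely \(\norm{y}^2 = \norm{\inpro{y}{y}_A}\) for \(y\in X\), and reduce the statement to the behaviour of the approximate identity \((u_i)\) on elements of the closed ideal \(I\).

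Fix \(x\in X\). First I would expand
\[
\inpro{xu_i - x}{xu_i - x}_A = u_i^*\inpro{x}{x}_A u_i - u_i^*\inpro{x}{x}_A - \inpro{x}{x}_A u_i + \inpro{x}{x}_A,
\]
using sesquilinearity of the \(A\)\nb-valued inner product and the compatibility \(\inpro{x}{y a}_A = \inpro{x}{y}_A a\). Since the \(u_i\) lie in~\(I\) and \(I\) is closed under involution with \(u_i = u_i^*\) (we may take the approximate identity to be self-adjoint), this simplifies to
\[
u_i \inpro{x}{x}_A u_i - u_i \inpro{x}{x}_A - \inpro{x}{x}_A u_i + \inpro{x}{x}_A.
\]

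Next I would observe the crucial point: \(\inpro{x}{x}_A\) lies in the ideal~\(I\) by the very definition of~\(I\) as the closed ideal generated by \(\inpro{X}{X}\). Therefore, since \((u_i)\) is an approximate identity of~\(I\), we have \(\inpro{x}{x}_A u_i \to \inpro{x}{x}_A\) and \(u_i \inpro{x}{x}_A \to \inpro{x}{x}_A\) in the norm of~\(A\). A routine three-term estimate then yields \(u_i \inpro{x}{x}_A u_i \to \inpro{x}{x}_A\) as well, because
\[
\norm{u_i \inpro{x}{x}_A u_i - \inpro{x}{x}_A} \le \norm{u_i}\,\norm{\inpro{x}{x}_A u_i - \inpro{x}{x}_A} + \norm{u_i \inpro{x}{x}_A - \inpro{x}{x}_A},
\]
and \(\norm{u_i}\le 1\).

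Putting the four terms together gives \(\norm{\inpro{xu_i - x}{xu_i - x}_A}\to 0\), whence \(\norm{xu_i - x}\to 0\) by the definition of the Hilbert module norm. The only mildly delicate point is recognising that \(\inpro{x}{x}_A\) itself belongs to the ideal~\(I\) (not merely to the linear span of inner products), so that the approximate identity acts on it as expected; the rest is bookkeeping with the sesquilinear expansion.
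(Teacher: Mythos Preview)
Your proof is correct and is precisely the standard argument from Lance's book that the paper cites; the paper itself does not give a proof but simply refers to \cite[Page 5]{Lance1995Hilbert-modules}. One minor remark: you do not actually need to assume the \(u_i\) are self-adjoint, since from \(\inpro{x}{x}_A u_i \to \inpro{x}{x}_A\) and the fact that \(\inpro{x}{x}_A\) is self-adjoint one gets \(u_i^*\inpro{x}{x}_A \to \inpro{x}{x}_A\) by taking adjoints, so the expansion with \(u_i^*\) works just as well.
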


\begin{lemma}
  \label{lem:positive-element}
  For an element~\(a\) of a unital \(\Cst\)\nb-algebra~\(A\),
  \(\norm{a}^2 - a^*a\) is a positive element.
\end{lemma}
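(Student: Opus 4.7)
The plan is to reduce the claim to the spectrum of $a^*a$ via continuous functional calculus, using the C\Star-identity. First I would note that $a^*a$ is self\nb-adjoint, so its spectrum $\sigma(a^*a)$ is a compact subset of $\R$. Moreover, $a^*a$ is positive (since $a^*a = b^*b$ in the obvious way), so $\sigma(a^*a) \subseteq [0,\infty)$.

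Next I would apply the C\Star-identity $\norm{a^*a} = \norm{a}^2$, combined with the fact that for a self\nb-adjoint element of a C\Star-algebra the norm equals the spectral radius. This gives $\sup\{|\lambda| : \lambda \in \sigma(a^*a)\} = \norm{a}^2$, so in conjunction with $\sigma(a^*a)\subseteq [0,\infty)$ one concludes
\[
\sigma(a^*a) \subseteq [0, \norm{a}^2].
\]
By the spectral mapping theorem applied to the polynomial $t \mapsto \norm{a}^2 - t$, we obtain
\[
\sigma(\norm{a}^2\cdot 1 - a^*a) = \{\norm{a}^2 - \lambda : \lambda \in \sigma(a^*a)\} \subseteq [0, \norm{a}^2].
\]
Since $\norm{a}^2\cdot 1 - a^*a$ is self\nb-adjoint and has spectrum in $[0,\infty)$, it is a positive element of~$A$.

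There is no real obstacle here: the entire argument is a standard one\nb-line spectral\nb-calculus computation. The only mild subtlety is the appeal to $\norm{a^*a} = \norm{a}^2$, which is the C\Star-identity itself, and the characterisation of positivity by non\nb-negative spectrum for self\nb-adjoint elements, both of which are part of the basic theory quoted from the cited references.
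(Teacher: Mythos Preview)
Your proof is correct and follows essentially the same approach as the paper's: both arguments use the \(\Cst\)-identity to identify the spectral radius of \(a^*a\) as \(\norm{a}^2\), and then conclude positivity from the fact that \(\sigma(a^*a)\subseteq[0,\norm{a}^2]\). The paper phrases this via the Gelfand identification of the \(\Cst\)-subalgebra generated by \(1\) and \(a^*a\) with \(\Cont(\sigma(a^*a))\), whereas you invoke the spectral mapping theorem directly, but these are the same argument.
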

\begin{proof}
  The \(\Cst\)\nb-subalgebra of~\(A\) generated by~1 and~\(a^*a\) is
  \(\Cont(\sigma(a^*a))\). As the spectral radius of~\(a^*a\)
  is~\(\norm{a}^2\), \(\norm{a}^2 - a^*a\) is a positive element in
  this \(\Cst\)\nb-subalgebra.
\end{proof}

\subsection{Bundles of \(\Cst\)-algebras}
\label{sec:upper-semic-bundl}

We refer the reader to Appendix~C
of~\cite{Williams2007Crossed-product-Cst-Alg} for the basics of the bundles of Banach spaces and
upper semicontinuous bundles of~\(\Cst\)\nb-algebras over locally compact Hausdorff spaces. However, we shall keep recalling some results whenever required. Following~\cite[Defintion
I.13.4]{Fell1988Representation-of-Star-Alg-Banach-bundles}, an
\emph{upper semicontinuous bundle of Banach spaces} is an open mapping of spaces \(p\colon \mathcal{B} \to X\) and following hold: each \emph{fibre}
\(\mathcal{B}_x \defeq p\inverse(x)\) for \(x\in X\) has a complex Banach
space structure and the \textrm{norm} from \(\mathcal{B}\to \R\) by \(b\mapsto \norm{b}\) is an upper semicontinuous function; the scalar multiplication\footnote{Fell writes
	an equivalent condition that \emph{looks} weaker,
	cf.~\cite[II.13.10]{Fell1988Representation-of-Star-Alg-Banach-bundles}}~\(\C
\times \mathcal{B} \to \mathcal{B}\) is also a continuous
function. Moreover, the addition is continuous from \(\mathcal{B}^{(2)} \to \mathcal{B}\) where
\(\mathcal{B}^{(2)}\) is the fibre product
\(\{(b,b')\in \mathcal{B}\times \mathcal{B} : p(b) = p(b')\}\). Finally, one more condition regarding the topology on~\(X\) and the norms on fibres is satisfied.
We call  \(\mathcal{B}\) is the
\emph{total space}, \(X\) is the \emph{base space} and~\(p\) is the
\emph{bundle map}. 

An upper
semicontinuous Banach bundle \(p\colon \A\to X \) is called an upper semicontinuous \(\Cst\)\nb-bundle if each of the 
fibre is a \(\Cst\)\nb-algebra, and the following conditions hold: the fibrewise multiplication is a continuous mapping
\(\A^{(2)} \to \A \) and the fibrewise involution, \(a \mapsto a^* \) from \(\A\to \A\) is continuous.
 The base spaces in all our bundles shall be assumed to be
locally compact and Hausdorff. It is customary to write either~\(p\)
or~\(\A\) instead of \(p\colon \A\to X\) when the base space and the
bundle map in a bundle are clear. An upper semicontinuous bundle of
\(\Cst\)\nb-algebras shall be simply called a bundle of
\(\Cst\)\nb-algebras or a \(\Cst\)\nb-bundle.

 Given a subspace \(Y\subseteq X\), \(p\colon \A|_Y \to Y\) denotes the
restricted bundle; note that the bundle map is still denoted by~\(p\).

Assume that a locally compact Hausdorff and second countable
space~\(X\) is given. A bundle of \(\Cst\)\nb-algebras
\(p\colon \A \to X\) is called \emph{separable} if each fibre in~\(p\)
is separable. In this case, the section algebra \(\Contz(X;\A)\) is
separable.

Given a bundle \(p\colon \A \to X\), a continuous
section has standard meaning and we shall simply call a
continuous section a \emph{section}. If \(K\subseteq X\) is compact, \(\Cont(K; \A|_K)\) is the
\(\Cst\)\nb-algebra of continuous sections of \(\A|_K \to K\).
A bundle \(p\colon \A \to X\) is said to have \emph{enough sections}
if for~\(x \in X\) and~\(a\in \A_x\), there is a section~\(f\)
with \(f(x)=a\). If the base space~\(X\) is locally compact and
Hausdorff, then the bundle \(p\colon \A\to X\) has enough
section~\cite{Hofmann1977Bundles-and-sheaves-equi-in-Cat-Ban}. In
general, Fell and Doran~\cite[Vol.\,1,
C.\,16]{Fell1988Representation-of-Star-Alg-Banach-bundles} prove the
existence of enough sections for an upper semicontinuous bundle of
Banach spaces over a paracompact base space.

For a bundle of \(\Cst\)\nb-algebras \(p\colon \A\to X\),
\(\Contz(X;\A)\) denotes the \(\Cst\)\nb-algebra of sections vanishing
at infinity.  \(\Contc(X; \A)\) denotes the dense two-sided ideal
of~\(\Contz(X;\A)\) consisting of continuous sections with compact
support.

\begin{lemma}[Proposition~1.7.1
  in~\cite{Dixmier1977Cst-Alg-Enlglish}] \label{lem:dixmier-approx-id}
  Let \(A\) be a \(\Cst\)\nb-algebra, and~\(m\) a two-sided ideal
  of~\(A\) which is dense in~\(A\). Then there is an increasing
  approximate identity of~\(A\) consisting of elements
  of~\(m\). If~\(A\) is separable, this approximate identity can be
  taken to be indexed by \(\{1,2, \dots\}\).
\end{lemma}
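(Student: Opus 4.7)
The plan is to adapt the classical construction of an increasing approximate unit in a \(\Cst\)\nb-algebra (cf.\ Murphy, Dixmier), verifying that all constructed positive contractions remain in the dense ideal~\(m\).

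First, set \(\Lambda\defeq\{a\in m:0\leq a,\ \norm{a}<1\}\), ordered by \(a\leq b\iff b-a\geq 0\). To see \(\Lambda\) is upward directed, given \(u,v\in\Lambda\), set \(a\defeq u(1-u)\inverse\) and \(b\defeq v(1-v)\inverse\) computed in the minimal unitization \(\tilde A\), and define \(w\defeq(a+b)(1+a+b)\inverse\). Operator monotonicity of \(t\mapsto t/(1+t)\) on \([0,\infty)\), combined with \(a\leq a+b\) and the identity \(u=a(1+a)\inverse\), yields \(u\leq w\), and symmetrically \(v\leq w\); functional calculus on the commutative \(\Cst\)\nb-subalgebra generated by \(a+b\) shows \(w\in A_+\) with \(\norm{w}<1\). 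Crucially, \(a=u\cdot(1-u)\inverse\in m\cdot\tilde A\subseteq m\) because \(m\) is a two-sided ideal of~\(\tilde A\) (acting by \(x+\lambda\cdot 1\mapsto xa+\lambda a\)); the same reasoning puts \(b, a+b\), and \(w\) in~\(m\), so \(w\in\Lambda\).

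Second, I would verify that the net \((\lambda)_{\lambda\in\Lambda}\) is an approximate identity. For \(y\in m\), put \(h\defeq y^*y\in m_+\) (an element of~\(m\) by the ideal property), and \(\lambda_n\defeq h(1/n+h)\inverse\in\Lambda\). Functional calculus gives \(\norm{y(1-\lambda_n)^{1/2}}^2=\norm{h(1-\lambda_n)}=\sup_{t\in\sigma(h)}t/(1+nt)\leq 1/n\). For \(\mu\in\Lambda\) with \(\mu\geq\lambda_n\), the membership \(1-\mu\in[0,1]\) gives \((1-\mu)^2\leq 1-\mu\leq 1-\lambda_n\); conjugation \(c\mapsto ycy^*\) preserves order, so \(\norm{y-y\mu}^2=\norm{y(1-\mu)^2y^*}\leq\norm{y(1-\lambda_n)y^*}=\norm{y(1-\lambda_n)^{1/2}}^2\leq 1/n\). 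A three-epsilon argument using density of~\(m\) in~\(A\) and the bound \(\norm{\mu}<1\) promotes this to \(\norm{x-x\mu}\to 0\) for every \(x\in A\), and a symmetric estimate handles \(\norm{x-\mu x}\to 0\).

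When~\(A\) is separable I would extract a countable cofinal sequence: fix a countable dense subset \(\{x_k\}_{k\geq 1}\) of~\(A\); having chosen \(\mu_1\leq\dots\leq\mu_{n-1}\) in~\(\Lambda\), apply the second step to produce \(\nu_{n,1},\dots,\nu_{n,n}\in\Lambda\) that witness \(1/n\)\nb-approximation for \(x_1,\dots,x_n\), then use directedness from the first step to pick \(\mu_n\in\Lambda\) dominating \(\mu_{n-1}\) and all the \(\nu_{n,k}\). The main technical obstacle lies in the second step: a naive invocation of operator monotonicity of \(t\mapsto t^2\) would fail, and the remedy is the mild scalar bound \((1-\mu)^2\leq 1-\mu\) on~\([0,1]\) together with order-preservation under conjugation by~\(y\), which together bypass any subtle Loewner-theoretic step while keeping the entire construction inside~\(m\).
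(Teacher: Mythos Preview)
The paper does not supply its own proof of this lemma; it is quoted as Proposition~1.7.1 of Dixmier and used only through its corollary. Your argument is correct and is precisely the classical Dixmier construction (the directed set \(\Lambda=\{a\in m:0\le a,\ \norm{a}<1\}\), the upper bound \(w=(a+b)(1+a+b)^{-1}\) via operator monotonicity of \(t\mapsto t/(1+t)\), the elements \(\lambda_n=h(1/n+h)^{-1}\), and the extraction of a sequence in the separable case), so there is nothing further to compare.
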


\begin{corollary}[Corollary of
  Lemma~\ref{lem:dixmier-approx-id}]\label{cor:cc-apprx-unit-general-case}
  Assume an upper semicontinuous \(\Cst\)\nb-bundle
  \(p\colon \A \to X\) over a locally compact Hausdorff space~\(X\) is
  given.  Then \(\Contz(X;\A)\) has a approximate identity consisting
  of elements in~\(\Contc(X;\A)\). Moreover, if the bundle is
  separable, then a countable approximate can be chosen.
\end{corollary}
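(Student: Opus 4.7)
The plan is to apply Lemma~\ref{lem:dixmier-approx-id} directly to the pair $A = \Contz(X;\A)$ and $m = \Contc(X;\A)$. The hypotheses of Dixmier's lemma are three: that $A$ is a $\Cst$-algebra, that $m$ is a two-sided ideal of $A$, and that $m$ is norm-dense in $A$. Each of these has essentially been recorded in the discussion preceding the corollary. Since $X$ is locally compact Hausdorff, the bundle $p\colon \A \to X$ has enough sections (attributed to Hofmann in the excerpt); the standard construction then makes $\Contz(X;\A)$ into a $\Cst$-algebra under pointwise operations and the supremum norm, and $\Contc(X;\A)$ sits inside it as a two-sided ideal which is dense in the inductive limit topology, and hence in the sup norm.

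Once these ingredients are in place, I would simply invoke Lemma~\ref{lem:dixmier-approx-id} with the pair above to produce an increasing approximate identity $(u_i)_i$ of $\Contz(X;\A)$ with every $u_i$ lying in $\Contc(X;\A)$; this is the first assertion. For the second assertion, I would appeal to the remark in the previous subsection that if $X$ is second countable (which is part of the definition of a \emph{separable} bundle quoted in the excerpt) and every fibre $\A_x$ is a separable $\Cst$-algebra, then $\Contz(X;\A)$ is itself separable. The countability clause of Lemma~\ref{lem:dixmier-approx-id} then yields an approximate identity indexed by $\{1,2,\dots\}$.

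The entire argument is thus a bookkeeping invocation of Dixmier's lemma, and I do not foresee any serious obstacle. The only point that deserves explicit verification is the density of $\Contc(X;\A)$ in $\Contz(X;\A)$, and this is a standard consequence of the existence of enough sections together with a partition-of-unity argument on the locally compact Hausdorff base $X$; if a self-contained justification were required, one would cover the support of a given section $f \in \Contz(X;\A)$ up to $\varepsilon$ by a compact set, choose a compactly supported continuous $\R$-valued bump equal to $1$ on this compact set, and multiply $f$ by this bump to obtain an approximant in $\Contc(X;\A)$.
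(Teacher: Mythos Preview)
Your proposal is correct and matches the paper's intent exactly: the corollary is stated without proof precisely because it is meant to be an immediate application of Lemma~\ref{lem:dixmier-approx-id} to the pair $A=\Contz(X;\A)$, $m=\Contc(X;\A)$, using the facts already recorded in the preceding discussion (that $\Contc(X;\A)$ is a dense two-sided ideal, and that a separable bundle has separable section algebra). Your write-up simply makes explicit what the paper leaves implicit.
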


\begin{lemma}
  \label{lem:local-cont-of-alg-rep}
  Let \(p\colon \A \to X\) be an upper semicontinuous bundle of
  \(\Cst\)\nb-algebras over a locally compact Hausdorff space. Assume
  that a nondegenerate \Star\nb-homomorphism
  \(L\colon \Contc(X;\A) \to \Bound(\Hils)\) is given. Then~\(L\) is
  continuous in the inductive limit topology on~\(\Contc(X;\A)\) and
  norm topology on \(\Bound(\Hils)\).
\end{lemma}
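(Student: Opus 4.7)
\emph{Proof plan.} The plan is to establish the stronger quantitative estimate $\norm{L(f)} \leq \norm{f}_\infty$ for every $f \in \Contc(X;\A)$; once this is in hand, continuity in the inductive limit topology is immediate, since a net $(f_i)$ converging to $f$ in the inductive limit topology has supports contained in a common compact set and satisfies $\norm{f_i - f}_\infty \to 0$, whence $\norm{L(f_i) - L(f)} \leq \norm{f_i - f}_\infty \to 0$.

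The main obstacle is an apparent circularity: one is tempted to extend $L$ to the $\Cst$-algebra $\Contz(X;\A)$ and then invoke the fact that every $*$-homomorphism between $\Cst$-algebras is automatically contractive, but defining such an extension typically presupposes a bound on $L$ of exactly the sort one is trying to prove. The trick I propose is to bypass the extension altogether. For a fixed $f \in \Contc(X;\A)$, let $B_f$ denote the closed $*$-subalgebra of the $\Cst$-algebra $\Contz(X;\A)$ generated by $f$. Every noncommutative polynomial in $f$ and $f^*$ without constant term is pointwise supported inside $\supp(f)$, because fibrewise multiplication cannot enlarge supports and addition enlarges them only by taking unions. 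Uniform limits preserve containment of support in a fixed closed set, so every element of $B_f$ is supported inside the compact set $\supp(f)$, giving $B_f \subseteq \Contc(X;\A)$.

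Once this inclusion is secured, the restriction $L|_{B_f} \colon B_f \to \Bound(\Hils)$ is a genuine $*$-homomorphism between $\Cst$-algebras, since $B_f$ inherits its $\Cst$-structure as a closed $*$-subalgebra of $\Contz(X;\A)$. Applying the automatic contractivity of $\Cst$-algebra $*$-homomorphisms (after passing to unitizations if either algebra fails to be unital) yields $\norm{L(f)} \leq \norm{f}_{B_f} = \norm{f}_\infty$, which is the required estimate. Nondegeneracy of $L$ plays no role in this route; it would only be needed if one pursued the extension-based argument to $\Contz(X;\A)$ via the approximate identity furnished by Corollary~\ref{cor:cc-apprx-unit-general-case}.
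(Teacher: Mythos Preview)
Your proof is correct and follows essentially the same route as the paper: both restrict \(L\) to a \(\Cst\)-subalgebra of \(\Contc(X;\A)\) containing the given \(f\) and then invoke the automatic contractivity of \(\Cst\)-algebra \Star-homomorphisms. The only cosmetic difference is the choice of that subalgebra---the paper uses the \(\Cst\)-algebra \(\Cont(K;\A|_K)\) of sections supported in a fixed compact \(K\), whereas you use the closed \Star-subalgebra \(B_f\) generated by \(f\), which sits inside the paper's \(\Cont(K;\A|_K)\) for \(K=\supp(f)\); both yield the same estimate \(\norm{L(f)}\leq\norm{f}_\infty\).
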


\begin{proof}
  Let \(\mathcal{C}\) denote the directed set consisting of compact
  subsets of~\(X\) as in
  Corollary~\ref{cor:cc-apprx-unit-general-case}. For
  \(K\in \mathcal{C}\), let~\(\Cont(K; \A|_K)\) be the Banach space
  consists of continuous section of~\(p\colon \A \to X\) vanishing
  outside~\(K\). The inductive limit topology on \(\Contc(X;\A)\) is
  induced by \(\{\Cont(K; \A|_K)\}_{K\in \mathcal{C}}\). Since
  each~\(\Cont(K;\A|_K)\) is in fact a \(\Cst\)\nb-algebra, the
  restriction of \(L|_{\Cont(K;\A|_K)}\) is a homomorphism of
  \(\Cst\)\nb-algebras, hence a contractive map.
  As~\(L|_{\Cont(K;\A|_K)}\) is continuous for every relatively
  compact open set~\(K \subseteq X\), so is~\(L\).
\end{proof}

Notice the proof of last lemma implies that for
\(f\in \Cont(K;\A|_K)\), \(\norm{L(f)}\leq \norm{f}_\infty\) for
\(L|_{\Cont(K;\A|_K)}\) is a contraction.

\subsection{Fell bundles}

By now, Fell bundles are established objects in Operator Algebras
which are being widely studied. Therefore, we shall \emph{briefly}
describe them. Our main references for Fell bundles
are~\cite{Muhly-Williams2008FellBundle-ME,
  Kumjian1998Fell-bundles-over-gpd}. Many of our notation and
elementary discussion of Fell bundles is adopted
from~\cite{Muhly-Williams2008FellBundle-ME}. We consider the
definition of Fell bundles from~\cite{Muhly-Williams2008FellBundle-ME}
for \etale\ groupoids as we defined; thus our groupoids are locally
compact and not necessarily Hausdorff. Our use of this definition is
justified if one sees the standard construction of Fell bundles from
partial dynamical systems. Moreover, the Fell bundles we consider are
not necessarily \emph{saturated}.

\begin{definition}[Fell bundle~\cite{Muhly-Williams2008FellBundle-ME}]
  \label{def:fell-bundle}
  A Fell bundle over an \etale\ groupoid~\(G\) is an upper
  semicontinuous bundle of Banach spaces \(p\colon \A\to G\) equipped
  with a continuous `multiplication' map \(m\colon \A^{(2)}\to \A\)
  and an involution map \(\A\to \A, a\mapsto a^*\) for \(a\in \A\)
  which satisfy the following axioms:
  \begin{enumerate}
  \item\label{item:Fell-1} \(p(m(a,b)) =p(a)p(b)\) for all
    \((a,b) \in \A^{(2)}\);
  \item\label{item:Fell-2} the induced map from
    \(\A_{\gamma_1}\times \A_{\gamma_2} \to \A_{\gamma_1 \gamma_2}\),
    \((a,b) \mapsto m(a,b)\) is bilinear for all
    \((\gamma_1, \gamma_2)\in G^{(2)}, a\in \A_{\gamma_1}\) and
    \(b\in \A_{\gamma_2}\);
  \item\label{item:Fell-3} \(m(m(a,b),c) =m(a, m(b,c))\) for
    \((a,b),(b,c)\in \A^{(2)}\);
  \item\label{item:Fell-4} \(\norm{m(a,b)} \leq \norm{a}\norm{b}\) for
    all \((a,b) \in \A^{(2)}\);
  \item\label{item:Fell-5} \(p(a^*)=p(a)^{-1}\) for all \(a\in \A\);
  \item\label{item:Fell-6} the induced map
    \(\A_\gamma \to \A_{\gamma^{-1}}\), \(a\mapsto a^*\) is conjugate
    linear for all \(\gamma \in G\);
  \item\label{item:Fell-7} \((a^*)^*=a\) for all \(a\in \A\);
  \item\label{item:Fell-8} \(m(a,b)^*=m(b^*,a^*)\) for all
    \((a,b)\in \A^{(2)}\);
  \item\label{item:Fell-9}
    \(\norm{m(a^*,a)}=\norm{m(a,a^*)}=\norm{a}^2\) for all
    \(a\in \A\);
  \item\label{item:Fell-10} \(m(a^*,a) \geq 0\) for all \(a\in \A\).
  \end{enumerate}
\end{definition}

\noindent Note that the last definition makes sense for non-\etale\
groupoids as well. In the above definiton \(\A^{2}\) is given by the set \(\{(a,b) : (s(p(a)), r(p(b))) \in G^{(2)}\}\). Given \((a,b) \in \A^{(2)}\), we use the
shorthand \(ab\) instead of \(m(a, b)\).  For two composition arrows
\(\gamma_1,\gamma_2\in G\), define the subset
\[
  \A_{\gamma_1} \cdot \A_{\gamma_2} = \textup{span} \{ab: a\in
  \A_{\gamma_1}, b\in \A_{\gamma_2}\} \subseteq \A_{\gamma_1\gamma_2}.
\]

\noindent The Fell bundle is called \emph{saturated} if
\(\A_{\gamma_1} \cdot \A_{\gamma_2}\) is dense in
\(\A_{\gamma_1 \gamma_2}\) for all
\((\gamma_1,\gamma_2) \in G^{(2)}\).

For \(x\in G^{(0)}\),
Conditions~\eqref{item:Fell-2}--\eqref{item:Fell-9} in
Definition~\ref{def:fell-bundle} make the fibre \(\A_x\) a
\(\Cst\)\nb-algebra with the obvious norm, multiplication and
involution. Furthermore, if \(\gamma \in G\) and \(a,b\in \A_\gamma\),
then \(\A_\gamma\) is a right Hilbert \(\A_{s(\gamma)}\)\nb-module
when equipped with the inner product
\(\inpro{a}{b}_{\A_{s(\gamma)}}\defeq a^*b\); and~\(\A_\gamma\) is a
left Hilbert \(\A_{r(\gamma)}\)\nb-module if the inner product is
defined by \({}_{\A_{r(\gamma)}}\inpro{a}{b} \defeq ab^*\). Thus, the Fell bundle
\(p\colon \A \to G\) is saturated if and only if~\(\A_\gamma\) is an
\(\A_{r(\gamma)}\)-\(\A_{s(\gamma)}\)\nb-imprimitivity bimodule for
each~\(\gamma \in G\).

For a subset ~\(X\subseteq G\), by \(\A|_X\) we denote the restriction
of~\(\A\) to~\(X\); but we still denote the bundle map~\(\A|_X\to X\)
by~\(p\) rather than~\(p|_{\A|_X}\).

We define \(\Contc(G; \A)\) analogous to \(\Contc(G)\) as
in~\cite{Connes1982A-Survey-of-folliantion-and-OP-Alg}. Thus, the
elements of \(\Contc(G;\A)\) are not necessarily continuous. With this
definition of \(\Contc(G;\A)\), the bundle~\(p\) has enough sections:
for this let \(\gamma\in G\) and \(\xi\in \A_\gamma\). For a locally
compact Hausdorff neighbourhood~\(X\) of~\(\gamma\) consider the
restricted bundle~\(\A|_X \to X\). Then this bundle has a
section~\(f\) which is continuous on~\(X\) has compact support
in~\(X\) and \(f(\gamma) = \xi\). Then \(f\in \Contc(G; \A)\) (in
fact, \(f\) belongs to the set which is typically denoted by
\(\Cont^0_{\mathrm{c}}(G;\A)\), see~\cite[Definition
3.9]{Exel2008Inverse-semigroup-combinotorial-Cst-alg} whose span
is~\(\Contc(G;\A)\)). In any case, we assume that our bundle has
enough sections.

\begin{lemma}
  Let \(G\) be a locally compact Hausdorff groupoid. Let
  \(p\colon \A \to G\) be a \emph{saturated} Fell bundle. Then each
  fibre of~\(\A\) is separable \emph{iff} for each \(x\in \base\) the
  fibre \(\A_x\) is a separable \(\Cst\)\nb-algebra.
\end{lemma}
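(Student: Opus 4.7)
The forward direction is immediate since \(\base \subseteq G\), so each \(\A_x\) with \(x \in \base\) is already a fibre of \(\A\). For the converse I would fix an arbitrary \(\gamma \in G\), set \(y \defeq r(\gamma)\), \(x \defeq s(\gamma)\), and aim to exhibit a countable dense subset of \(\A_\gamma\). The key structural input, available by the paragraph following Definition~\ref{def:fell-bundle}, is that saturatedness makes \(\A_\gamma\) an \(\A_y\)\nb-\(\A_x\)\nb-imprimitivity bimodule; in particular the left algebra is realised as the compact operators on the right Hilbert \(\A_x\)\nb-module \(\A_\gamma\), that is, \(\A_y \cong \mathcal K(\A_\gamma)\) via the identification sending \({}_{\A_y}\inpro{\xi}{\eta} = \xi\eta^*\) to the rank-one operator \(\theta_{\xi,\eta}(\zeta) \defeq \xi \cdot \inpro{\eta}{\zeta}_{\A_x}\).

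The plan is then to extract a countable Hilbert-module generating family for \(\A_\gamma\) from the separability of \(\A_y \cong \mathcal K(\A_\gamma)\). Since \(\A_y\) is a separable \(\Cst\)\nb-algebra it carries a countable approximate identity \((u_n)_{n \in \N}\). Each \(u_n\), being a compact operator on \(\A_\gamma\), is a norm-limit of finite sums of rank-one operators \(\theta_{\xi,\eta}\); choosing one such approximating sequence for every \(n\) produces a countable family \(S \subseteq \A_\gamma\) with the property that every \(u_n\) lies in the closed span of \(\{\theta_{\xi,\eta} : \xi, \eta \in S\}\). For any \(\zeta \in \A_\gamma\), Lemma~\ref{lem:appx-unit-for-full-Hilbert-mod} gives \(u_n \zeta \to \zeta\), and because \(\theta_{\xi,\eta}(\zeta) = \xi \cdot \inpro{\eta}{\zeta}_{\A_x}\) lies in the closed right-\(\A_x\)\nb-submodule generated by \(S\), so does \(\zeta\). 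Hence \(\A_\gamma\) is countably generated as a right Hilbert \(\A_x\)\nb-module by \(S\).

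Finally, picking a countable dense subset \(\{b_m\}_{m \in \N}\) of the separable \(\Cst\)\nb-algebra \(\A_x\), the \(\mathbb Q[i]\)\nb-linear combinations of \(\{\xi \cdot b_m : \xi \in S, m \in \N\}\) form a countable dense subset of \(\A_\gamma\). I expect the one delicate point to be the middle paragraph: replacing the genuine approximate identity \((u_n)\) by finite-rank operators while keeping all underlying vectors in a single countable set \(S\). This is a routine diagonal argument but must be set up so that the closed span of the approximating rank-one operators still captures all of the \(u_n\) and thereby forces \(u_n \zeta\), and hence \(\zeta\), into the closed \(\A_x\)\nb-submodule generated by \(S\).
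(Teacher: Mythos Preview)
Your argument is correct and follows the same route as the paper: use saturatedness to identify \(\A_{r(\gamma)}\cong\mathcal K(\A_\gamma)\), deduce from its \(\sigma\)\nb-unitality that \(\A_\gamma\) is countably generated as a right \(\A_{s(\gamma)}\)\nb-module, and then use separability of \(\A_{s(\gamma)}\) to conclude. The only difference is packaging: the paper invokes \cite[Corollary~1.1.25]{Jensen-Thomsen1991KK-theory-book} (``countably generated \(\Leftrightarrow\) compacts \(\sigma\)\nb-unital'') as a black box, whereas you reprove the needed implication by hand via the approximate-identity and rank-one-operator argument. One small remark: your appeal to Lemma~\ref{lem:appx-unit-for-full-Hilbert-mod} for \(u_n\zeta\to\zeta\) is using the \emph{left} module structure over \(\A_y\); the lemma as stated is for the right structure, but the left analogue holds for the same reason since \(\A_\gamma\) is full on the left by saturatedness.
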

\begin{proof}
  One implication is clear. For the converse, let \(\gamma\in
  G\). Then \(A_\gamma\) is an
  \(A_{r(\gamma)}\)-\(A_{s(\gamma)}\)-equivalence. Recall
  from~\cite[Corollary~1.1.25]{Jensen-Thomsen1991KK-theory-book} that
  a Hilbert module is countably generated \emph{iff} the
  \(\Cst\)\nb-algebra of the compact operators on it is
  \(\sigma\)\nb-unital. Since both~\(A_{r(\gamma)}\)
  and~\(A_{s(\gamma)}\) are separable, \(A_\gamma\)~is countably
  generated, hence separable.
\end{proof}

\noindent Last lemma says that, a \emph{saturated} Fell bundle over a
locally compact second countable Hausdorff groupoid is separable
\emph{iff} the \(\Cst\)\nb-bundle over the unit space is separable.

\section{Representations of \(\Contc(G;\A)\)}
\label{sec:rep-of-Cc-G-A}

Consider a Fell bundle \(p\colon \A \to G\) over an \etale\
groupoid~\(G\). As it is done for groups or groupoids, one can
construst the \emph{full} \(\Cst\)\nb-algebra \(\Cst(G;\A)\) of this
Fell bundle. One way to define this \(\Cst\)\nb-algebras is to
announce it the enveloping \(\Cst\)\nb-algebra of the
\(^*\)\nb-algebra \(\Contc(G;\A)\). The existence of the enveloping
algebra needs to be justified. Another way, similar to locally compact
groups, is to define it to be the universal \(\Cst\)\nb-algebra using
the unitary representation \emph{of the Fell
  bundle}. In~\cite{Muhly-Williams2008FellBundle-ME}, Muhly and
Williams define~\(\Cst(G;\A)\) in both the ways for a groupoid with a
\emph{Haar system}; they assume that the groupoid is locally compact
second countable and Hausdorff and the Fell bundle is saturated. In
the same article, they also prove Renault's integration and
disintegration theorems which establishes the equivalence of the two
methods of defining~\(\Cst(G;\A)\).

Third interesting, and practically useful way to construct
representations of Fell bundles is to use their
\emph{pre-representations}~\cite[Definitino
4.1]{Muhly-Williams2008FellBundle-ME}. All in all, in case of
\emph{\etale} groupoids, we may consider following four types of
\emph{representations} of the \Star\nb-algebra \(\Contc(G;\A)\) on a
Hilbert space~\(\Hils\):

\begin{enumerate}
\item\label{it:alg-rep} a nondegenerate \Star\nb-homomorphism
  \( \pi \colon \Cc(G;\A) \to \Bound(\Hils)\) (without any continuity
  condition);
\item\label{it:i-norm-rep} an \(I\)\nb-norm bounded nondegenerate
  \Star\nb-homomorphism \( \pi \colon \Cc(G;\A) \to \Bound(\Hils)\);
\item\label{it:ilt-rep} a nondegenerate \Star\nb-homomorphism
  \( \pi \colon \Cc(G;\A) \to \Bound(\Hils)\) continuous in the
  inductive limit topology;
\item\label{it:pre-rep} a \emph{pre-representation}
  \( \pi \colon \Cc(G;\A) \to \textup{Lin}(\Hils_0)\) on a dense
  subspace~\(\Hils_0\subseteq \Hils\).
\end{enumerate}

\noindent Our goal is proving Theorem~\ref{thm:norm-equi} which shows
that all four notions above are equivalent for Fell bundles over
\emph{\etale} groupoids \emph{without} appealing to the technical
machinery of the disintegration theorem
in~\cite{Muhly-Williams2008FellBundle-ME}. Nonetheless, extending a
pre-representation~\ref{it:pre-rep} to a one that is continuous in the
inductive limit topology~\ref{it:ilt-rep} involves technicalities.

The equivalence of all four representations above allows us to use any
one of these representations can be used to define the full
\(\Cst\)\nb-algebra \(\Cst(G;\A)\) for a Fell bundle \(\A\) over an
\etale\ groupoid~\(G\).

The upcoming content is planned as follows: we start the discussion by
defining some standard ideas which we have already mentioned
above. Then we quickly recall the \Star\nb-algebra \(\Contc(G;\A)\)
from~\cite{Muhly-Williams2008FellBundle-ME}. Then we recall the
notions of representations described above. In
Subsection~\ref{sec:equiv-three-repr}, we first prove the equivalence
of the three representations \ref{it:alg-rep}--\ref{it:ilt-rep} on
page~\pageref{it:alg-rep}. In in Section~\ref{sec:pre-rep}, we prove
equivalence of pre-representations with the rest ones.  \medskip

Now on, we fix~\(G\) to be a locally compact \etale\ groupoid (as in
\S~\ref{sec:etale-groupoids}) and a Fell bundle \(p\colon \A\to G\)
(the Fell bundle need not be \emph{saturated or separable}).  To start
with, make~\(\Contc(G;\A)\) into a \Star\nb-algebra using the
following convolution and involution:
\[
  f*g(\gamma)= \sum_{\alpha\beta=\gamma}f(\alpha)g(\beta) \quad
  \textup{and} \quad f^*(\gamma)=f(\gamma^{-1})^*
\]
where \(f,g\in \Contc(G;\A)\). It is a standard fact that last
operations make \(\Contc(G;\A)\) a \Star-algebra. The convolution is
continuous in the inductive limit topology
on~\(\Contc(G;\A)\). Since~\(G\) is {\etale}, \(\base\) is an open Hausdorff subset of \(G\). Then by definition of \(\Cc(G;\A)\), we can say that
\(\Contc(\base[G],\A|_{\base[G]})\) is a \Star\nb-subalgebra
of \(\Contc(G;\A)\).

Note that if \(f\) is a continuous section vanishing outside~\(\base\)
and \(g\in \Contc(G;\A)\), then
\begin{equation}
  \label{eq:conv}
  f*g(\gamma) = f(r(\gamma)) g(\gamma), \quad g*f(\gamma) = g(\gamma)
  f(s(\gamma))
\end{equation}
for \(\gamma\in G\). As \(f\circ r\) is continuous section on~\(G\),
\(f\circ r\cdot g\in \Contc(G; \A)\). Similarly,
\(g\cdot f\circ s\in \Contc(G; \A)\).

Define the \emph{\(I\)-norm} on \(\Contc(G;\A)\) by
\[
  \norm{f}_I= \max \biggl\{\sup_{x\in G^{(0)}}\sum_{\gamma\in
    r\inverse(x)} \norm{f(\gamma)},\sup_{x\in G^{(0)}}\sum_{\gamma\in
    s\inverse (x)} \norm{f(\gamma)}\biggr\}.
\]

\begin{example}
  \label{exa:I-norm-on-bisection}
  Let \(f\in \Contc(G;\A)\) be supported on a bisection~\(U\). Then
  \(\norm{f}_I = \norm{f}_\infty \). Because
  \[
    \sup_{x\in G^{(0)}}\sum_{\gamma\in r\inverse(x)} \norm{f(\gamma)}
    = \norm{f\circ s|_U\inverse}_{\infty} = \norm{f}_\infty =
    \norm{f\circ r|_U\inverse}_{\infty}= \sup_{x\in
      G^{(0)}}\sum_{\gamma\in s\inverse (x)} \norm{f(\gamma)}.
  \]
\end{example}

A \emph{representation} of the \Star\nb-algebra \(\Contc(G; \A)\) on a
Hilbert space \(\Hils\) is a non-degenerate \(^*\)-homomorphism
\(L\colon \Contc(G;\A)\to \Bound(\Hils)\) (without any continuity
condition on \(L\)). The representation~\(L\) is called
\emph{\(I\)-norm bounded} if \(\norm{L(f)}\leq \norm{f}_I\) for all
\(f\in \Contc(G;\A)\).

We shall also discuss representations
\(\Contc(G;\A) \to \Bound(\Hils)\) which are \emph{continuous in the
  inductive limit topology} on~\(\Contc(G; \A)\). Next is the
definition of a \emph{pre-representation}.

Let \(\Hils_0\) be a dense subspace of a Hilbert space \(\Hils\), and
denote by \(\textup{Lin}(\Hils_0)\) the collection of all linear
mappings of the vector space \(\Hils_0\) to itself.

\begin{definition}[{\cite[Definition~4.1]{Muhly-Williams2008FellBundle-ME}}]
  \label{def:pre-rep}
  A pre\nb-representation of \(\Cc(G;\A)\) on a Hilbert
  space~\(\Hils\) is a pair \((\Hils_0, L)\) consisting of a linear
  subspace \(\Hils_0 \subseteq \Hils\) and a homomorphism of complex
  algebras \(L\colon \Contc(G;\A) \to \textup{Lin}(\Hils_0)\) such
  that the following conditions holds for given
  \(\xi, \eta \in \Hils_0\):
  \begin{enumerate}
  \item (Continuity) \(f\mapsto \inpro{\eta}{L(f)\xi} \) is continuous
    in the inductive limit topology on \(\Contc(G;\A)\);
  \item (Adjoinatability)
    \( \inpro{\eta}{L(f)\xi} = \inpro{L(f^*)\eta}{\xi}\);
  \item \label{it:H00} (Nondegeneracy)
    \(\Hils_{00}\defeq \textup{span}\{L(f)\xi : f\in \Contc(G,\A), \xi
    \in \Hils_0 \}\) is dense in \(\Hils\).
  \end{enumerate}
\end{definition}
The nomenclature of properties above is not standard; we coin this
nomenclature as it facilitate latter writing.  We frequently
refer~\(L\) as the pre-representation on~\(\Hils\) assuming that the
representation subspace is~\(\Hils_0\). Moreover,~\(\Hils_{00}\) shall
have the same meaning as above.  Note that
\(\Hils_{00}\subseteq \Hils_0\), and the nondegeneracy condition
implies that~\(\Hils_{0}\) is dense in~\(\Hils\).

\subsection{Equivalence of the first three representations}
\label{sec:equiv-three-repr}

In current section, we prove equivalence of the representations
\ref{it:alg-rep}--\ref{it:ilt-rep} on page~\pageref{it:alg-rep}. Since
the inductive limit topology is weaker than the \(I\)\nb-norm
topology, a representation that is continuous in the inductive limit
topology is also continuous in the \(I\)\nb-norm topology. And a
representation in \(I\)\nb-topology is obviously a nondegenerate
\Star\nb-homomorphism. Therefore, among the definitions on
page~\pageref{it:alg-rep},
\ref{it:ilt-rep}\(\implies\)\ref{it:i-norm-rep}\(\implies\)
\ref{it:alg-rep}. The upcoming
Proposition~\ref{prop:three-reps-are-equi} proves that a nondegenerate
\Star\nb-homomorphism is, in fact, continuous in the inductive limit
topology; this justifies the equivalence of the first three notions of
representations.

The idea of this proof is coming from Section~3 in Exel's
article~\cite{Exel2008Inverse-semigroup-combinotorial-Cst-alg}; it can
also be found in Sims' book~\cite[\S
9.2]{Sims-Szabo-Williams2020Operator-alg-book}.

Consider the Fell bundle \(p\colon \A \to G\) over an \etale\
groupoid~\(G\) as fixed earlier. Recall from preliminaries
that~\(G^{\mathrm{op}}\) denotes the set of bisections in~\(G\). For
two bisections~\(U\) and~\(V\) in~\(G^{\mathrm{op}}\), define the sets
\[
  U^{-1} = \{u^{-1} : u\in U\} \quad \text{ and } \quad UV = \{ uv :
  u\in U, v\in V , (u,v) \in G^{(2)} \}.
\]
Then \(U\inverse, UV\in G^{\mathrm{op}}\); here \(VU\) could be
empty. In particular, it can be readily seens that the bisections
\(UU^{-1} = r(U)\) and \(U\inverse U = s(U)\) are subsets of
\(\base\). In fact,~\(G^{\mathrm{op}}\) is an inverse semigroup with
the inverses and products of subsets, see Proposition~2.2.4
in~\cite{PatersonA1999Gpd-InverseSemigps-Operator-Alg}.

For a bisection \(U\subseteq G\), we consider \(\Contc(U;\A|_U)\) as a
subset of \(\Contc(G;\A)\) in the obvious way. Following lemma is an
analogue of Proposition~3.11
in~\cite{Exel2008Inverse-semigroup-combinotorial-Cst-alg} for Fell
bundles.
  
\begin{lemma}\label{lem:convo-func:supp:bisec}
  Let \(U\) and \(V\) be bisections of \(G\).
  \begin{enumerate}
  \item
    \label{lem:convulu-supp} If \(f \in \Cc(U;\A|_{U})\) and
    \(g\in \Cc(V;\A|_{V})\), then \(f*g \in \Cc(UV;\A|_{UV})\).
  \item
    \label{lem:involu-supp} If \(f \in \Cc(U;\A|_{U})\), then
    \(f^* \in \Cc(U^{-1}; \A|_{U^{-1}})\).
  \item \label{lem:inv-with-adj} If \(f\in \Cc(U;\A|_{U})\), then
    \(f*f^*\) and \(f^**f \in \Cc(\base; \A|_{\base})\). In fact,
    \(f*f^* \in \Cc(r(U); \A|_{r(U)})\) and
    \(f^**f \in \Contc(s(U) ; \A|_{s(U)})\).
  \item \label{lem:uni-norm-bisec} If \(f\in \Cc(U;\A|_{U})\), then
    \(\norm{f*f^*}_\infty = \norm{f^**f}_\infty = \norm{f}_\infty^2\).
  \end{enumerate}
\end{lemma}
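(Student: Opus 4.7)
The plan is to prove the four items by reducing every convolution sum to a single term, exploiting that the range and source maps are homeomorphisms on each bisection; once this reduction is made, every assertion becomes a pointwise statement in the fibres of the Fell bundle.

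For item~\ref{lem:convulu-supp}, I would first use the bisection property to observe that each \(\gamma\in UV\) admits a \emph{unique} factorisation \(\gamma=\alpha\beta\) with \(\alpha\in U\) and \(\beta\in V\); explicitly, \(\alpha=(r|_U)\inverse(r(\gamma))\) and \(\beta=(s|_V)\inverse(s(\gamma))\). Therefore the sum defining \(f*g\) collapses to the single term \(f(\alpha)g(\beta)\) on \(UV\) and vanishes off \(UV\). Continuity of \(f*g\) on the Hausdorff bisection \(UV\) follows from continuity of \(r|_U\inverse\), \(s|_V\inverse\), and of the bundle multiplication \(m\); the support is compact because it lies inside the image of \((\supp f\times\supp g)\cap G^{(2)}\) under \(m\). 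Item~\ref{lem:involu-supp} is similar but easier: \(f^*(\gamma)=f(\gamma\inverse)^*\) vanishes outside \(U\inverse\), continuity there follows from continuity of inversion and of the bundle involution, and \(\supp(f^*)=\supp(f)\inverse\) is compact.

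Item~\ref{lem:inv-with-adj} is then an immediate application of the previous two parts with \(V=U\inverse\): one reads off \(UU\inverse=r(U)\subseteq\base\) and \(U\inverse U=s(U)\subseteq\base\) directly from the bisection structure. For item~\ref{lem:uni-norm-bisec}, the unique-factorisation argument from~\ref{lem:convulu-supp} shows that for each \(x\in r(U)\) there is a unique \(\alpha_x\in U\) with \(r(\alpha_x)=x\), and hence \((f*f^*)(x)=f(\alpha_x)f(\alpha_x)^*\). The C\Star-identity axiom~\ref{item:Fell-9} of Definition~\ref{def:fell-bundle} gives \(\norm{(f*f^*)(x)}=\norm{f(\alpha_x)}^2\); taking the supremum over \(x\in r(U)\) and using that \(r|_U\colon U\to r(U)\) is a bijection yields \(\norm{f*f^*}_\infty=\norm{f}_\infty^2\). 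The computation for \(f^**f\) is symmetric via \(s|_U\).

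I do not anticipate a genuine obstacle here; the only subtlety worth flagging is that \(G\) need not be Hausdorff, so one must keep every continuity argument confined to the Hausdorff bisection that carries the function in question, which is exactly what the bisection structure guarantees.
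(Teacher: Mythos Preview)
Your proof is correct and follows essentially the same approach as the paper: the paper skips items~\ref{lem:convulu-supp}--\ref{lem:inv-with-adj} by pointing to the analogous groupoid result in Exel, while you spell out the unique-factorisation argument that underlies that reference; for item~\ref{lem:uni-norm-bisec} your computation \(\norm{(f*f^*)(x)}=\norm{f(\alpha_x)}^2\) via axiom~\ref{item:Fell-9} followed by a supremum is exactly the paper's argument.
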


\noindent The proof of
(\ref{lem:convulu-supp})--(\ref{lem:inv-with-adj}) above lemma closely
follow that
of~\cite[Proposition~3.11]{Exel2008Inverse-semigroup-combinotorial-Cst-alg},
hence we skip them. The last claim follows from the following
observation: let \(U\) be a bisection and \(f\in \Contc(U;\A|_{U})\). If \(x\in r(U)\), then
\[
  \norm{f*f^*(x)} = \norm{f*f^*(\gamma \gamma\inverse)} =
  \norm{f(\gamma)}^2
\]
where \(\gamma\in r\inverse(x)\). And \(f*f^*(x) = 0\) if
\(x\notin r(U)\). The claim follows by taking supremum in the last
equation.

\begin{proposition}
  \label{prop:three-reps-are-equi}
  Let \(p\colon \A\to G\) be a Fell bundle over an \etale\
  groupoid~\(G\). Let \(L\colon \Contc(G;\A) \to \Bound(\Hils)\) be a
  nondegenerate \Star\nb-homomorphism. Then~\(L\) is continuous in the
  inductive limit topology on~\(\Contc(G;\A)\), and hence for the
  \(I\)\nb-norm on~\(\Contc(G;\A)\).
\end{proposition}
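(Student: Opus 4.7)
The plan is to reduce to $f$ supported on a single bisection, invoke the $\Cst$\nb-identity in $\Bound(\Hils)$ to bring $f^**f$ into the section algebra over the unit space, and conclude via contractivity of $^*$\nb-homomorphisms between $\Cst$\nb-algebras. The analytic heart is Lemma~\ref{lem:convo-func:supp:bisec}(\ref{lem:inv-with-adj}), which puts $f^**f$ inside $\Contc(\base; \A|_{\base})$, whose base is the Hausdorff unit space.

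Concretely, fix a bisection $U \in G^{\mathrm{op}}$ and $f \in \Contc(U; \A|_U)$. By Lemma~\ref{lem:convo-func:supp:bisec}(\ref{lem:inv-with-adj}), $f^**f$ lies in $\Contc(s(U); \A|_{s(U)}) \subseteq \Contc(\base; \A|_{\base})$. The remark following Lemma~\ref{lem:local-cont-of-alg-rep} says that for every compact $K \subseteq \base$ the restriction of~$L$ to the $\Cst$\nb-algebra $\Cont(K; \A|_K)$ is a contractive $^*$\nb-homomorphism; applied to $K = \supp(f^**f)$ this yields
\[
  \norm{L(f^**f)} \le \norm{f^**f}_\infty = \norm{f}_\infty^2,
\]
the second equality being Lemma~\ref{lem:convo-func:supp:bisec}(\ref{lem:uni-norm-bisec}). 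The $\Cst$\nb-identity on $\Bound(\Hils)$ then gives $\norm{L(f)}^2 = \norm{L(f)^*L(f)} = \norm{L(f^**f)} \le \norm{f}_\infty^2$, and hence $\norm{L(f)} \le \norm{f}_\infty = \norm{f}_I$, the last equality being Example~\ref{exa:I-norm-on-bisection}.

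For a general $f \in \Contc(G;\A)$, I would write $f = \sum_{i=1}^N f_i$ with $f_i \in \Contc(U_i; \A|_{U_i})$ for bisections $U_i$; such a decomposition exists by the Connes-style definition of $\Contc(G;\A)$ together with the fact that bisections form a basis for the topology of~$G$. This yields $\norm{L(f)} \le \sum_i \norm{f_i}_\infty$. To upgrade this to continuity in the inductive limit topology, I would present a net $f_\lambda \to 0$ as a finite sum of nets each supported in a fixed compact subset of a fixed open Hausdorff set and converging uniformly; bisection refinement together with a subordinate partition of unity on each piece produces decompositions $f_\lambda = \sum_i f_\lambda^{(i)}$ with bisection supports fixed in $\lambda$ and $\norm{f_\lambda^{(i)}}_\infty \to 0$, so $\norm{L(f_\lambda)} \to 0$. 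The main obstacle is purely bookkeeping in the non\nb-Hausdorff case: verifying that bisection refinement with a partition-of-unity decomposition delivers uniformly controlled sup-norms. The genuinely analytic content is confined to confining $f^**f$ inside the $\Cst$\nb-subalgebra over~$\base$, which is precisely what renders the argument algebraic and free of any integration-theoretic machinery.
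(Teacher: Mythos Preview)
Your approach matches the paper's: for bisection-supported~$f$, push $f^**f$ into $\Contc(\base;\A|_{\base})$ via Lemma~\ref{lem:convo-func:supp:bisec}(\ref{lem:inv-with-adj}) and invoke contractivity of the restricted $^*$\nb-homomorphism (the remark after Lemma~\ref{lem:local-cont-of-alg-rep}) together with Lemma~\ref{lem:convo-func:supp:bisec}(\ref{lem:uni-norm-bisec}) to obtain $\norm{L(f)}\le\norm{f}_\infty$. For general~$f$ the paper is more direct than your net-and-partition-of-unity argument: it simply writes $\norm{L(f)}\le\sum_{i=1}^n\norm{f_i}_\infty\le n\,\norm{f}_\infty$ for a bisection decomposition $f=\sum_i f_i$, which suffices because continuity in the inductive limit topology amounts to boundedness on each bisection piece---and that is exactly what the first step establishes---so your bookkeeping worry does not arise.
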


\begin{proof}
  First of all, note that since
  \(\Contc(\base[G]; \A|_{\base[A]})\subseteq \Contc(G;\A)\) is a
  \Star\nb-subalgebra, the
  restriction~\(L|_{\Contc(\base[G]; \A|_{\base[A]})}\) is continuous
  in the inductive limit topology
  on~\(\Contc(\base[G]; \A|_{\base[A]})\) due
  to~Lemma~\ref{lem:local-cont-of-alg-rep}.

  Now, let \(U\subseteq G\) be an open bisection. Let
  \(f\in \Contc(U;\A|_{U})\). So
  that~\(f^**f\in \Contc(\base[G]; \A|_{\base[G]})\) due to
  Lemma~\ref{lem:convo-func:supp:bisec}(3). With this in mind, we
  compute

\begin{equation}\label{equ:rep:contractive:bise}
  \norm{L(f)}^2 = \norm{L(f)^* L(f)} = \norm{L(f^**f)}\leq \norm{f^**f}_{\infty} = \norm{f}^2_{\infty}
\end{equation}

\noindent where the inequality follows from the remark after the proof
of Lemma~\ref{lem:local-cont-of-alg-rep}, and the last equality is due
to Lemma~\ref{lem:convo-func:supp:bisec}(\ref{lem:uni-norm-bisec}).

In general, let \(f\in \Cc(G;\A)\) with \(f = \sum_{i=1}^{n}f_i\)
where each \({U_i}\) is a bisection and \(f_i\in
\Cc(U_i;\A|_{U_i})\). Using Equation~\eqref{equ:rep:contractive:bise},
we see that
\[
  \norm{L(f)} \leq \sum_{i=1}^n\norm{L(f_i)} \leq \sum_{i=1}^n
  \norm{f_i}_\infty \leq n \norm{f}_\infty.
\]
\end{proof}

Next lemma, meant for a later use, concludes this section.

\begin{lemma}
  \label{lem:mult-by-bdd-sections}
  Let \((u_{\lambda})_{\lambda \in \Index}\) be an approximate unit
  of~\(\Contz(\base; \A|_{\base})\) where
  \(u_{\lambda}\in \Contc(\base; \A|_{\base})\), see
  Corollary~\ref{cor:cc-apprx-unit-general-case}.
  \begin{enumerate}
  \item For \(f\in \Contc(G;\A)\), \(u_{\lambda}*f\to f\) in the
    inductive limit topology.
  \item Let \(P\) be the vector subspace of~\(\Contc(G;\A)\) spanned
    by the set
    \(\Contc(\base;\A|_{\base})*\Contc(G;\A) \defeq \{\phi* f : \phi
    \in \Contc(\base;\A|_{\base}) \text{ and } f\in
    \Contc(G;\A)\}\). Then \(P\) is uniformly dense in
    \(\Contc(G;\A)\).
  \end{enumerate}
\end{lemma}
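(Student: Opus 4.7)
The plan is to prove part (1) first and deduce part (2) immediately from it.

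For (1), I would first decompose $f = \sum_{i=1}^n f_i$ with each $f_i \in \Contc(U_i;\A|_{U_i})$ for a bisection $U_i$, so by linearity of the convolution and of convergence in the inductive limit topology it suffices to treat a single $f \in \Contc(U;\A|_U)$ for a bisection $U$. Since $u_\lambda$ is supported on $\base$, formula~\eqref{eq:conv} gives $(u_\lambda * f)(\gamma) = u_\lambda(r(\gamma))\,f(\gamma)$ for all $\gamma \in G$, so $\supp(u_\lambda*f) \subseteq \supp(f)$ uniformly in $\lambda$; the support condition required by the inductive limit topology is therefore automatic.

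The crux is the uniform estimate $\norm{u_\lambda * f - f}_\infty \to 0$. Choosing $(u_\lambda)$ self-adjoint (as we may by Lemma~\ref{lem:dixmier-approx-id}) and exploiting the left Hilbert $\A_{r(\gamma)}$\nb-module structure on $\A_\gamma$, whose inner product is ${}_{\A_{r(\gamma)}}\inpro{a}{b} = ab^*$, one gets fibrewise
\[
\norm{u_\lambda(r(\gamma))\, f(\gamma) - f(\gamma)}^2
= \norm{u_\lambda h u_\lambda - u_\lambda h - h u_\lambda + h},
\]
where $h = f(\gamma) f(\gamma)^* \in \A_{r(\gamma)}$. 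By Lemma~\ref{lem:convo-func:supp:bisec}(\ref{lem:inv-with-adj}), $F \defeq f*f^*$ lies in $\Contc(\base;\A|_{\base})$ and satisfies $F(r(\gamma)) = f(\gamma)f(\gamma)^*$ for $\gamma \in U$. Since $F$ and $u_\lambda$ are all supported on $\base$, convolution among them reduces to pointwise fibrewise multiplication by~\eqref{eq:conv}, and the right-hand side above is the value at $r(\gamma)$ of the single element $u_\lambda * F * u_\lambda - u_\lambda * F - F * u_\lambda + F \in \Contc(\base;\A|_{\base})$. Taking the supremum over $\gamma$ and rewriting this element as $u_\lambda(Fu_\lambda - F) - (Fu_\lambda - F)$, I obtain
\[
\norm{u_\lambda * f - f}_\infty^2 \leq 2\,\norm{F u_\lambda - F}_\infty,
\]
and the right-hand side tends to $0$ because $(u_\lambda)$ is an approximate unit of $\Contz(\base;\A|_{\base})$ and $F$ belongs to this $\Cst$\nb-algebra.

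Part (2) is then immediate: for every $f \in \Contc(G;\A)$, $u_\lambda * f$ lies in $P$ and converges to $f$ uniformly by (1), so $P$ is uniformly dense in $\Contc(G;\A)$.

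The main obstacle is converting the fibrewise convergence $u_\lambda(r(\gamma))\,f(\gamma) \to f(\gamma)$ into a genuinely uniform estimate in $\gamma$. Because the coefficient algebra $\A_{r(\gamma)}$ varies with $\gamma$ and the Fell bundle is \emph{not} assumed saturated, a fibrewise application of Lemma~\ref{lem:appx-unit-for-full-Hilbert-mod} does not automatically give uniformity. The trick is to repackage the whole fibrewise error into a single section in the fixed $\Cst$\nb-algebra $\Contz(\base;\A|_\base)$ via $F = f*f^*$, after which the approximate-unit property yields uniformity in one stroke.
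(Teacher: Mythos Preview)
Your proof is correct and follows essentially the same route as the paper: reduce to a bisection-supported~\(f\), observe the uniform support containment, square the error and express it via \(F=f*f^*\in\Contc(\base;\A|_{\base})\), then invoke the approximate-unit property in \(\Contz(\base;\A|_{\base})\); part~(2) is deduced from~(1) exactly as you do. The only cosmetic difference is that the paper reaches the identity \(\norm{u_\lambda*f-f}_\infty^2=\norm{u_\lambda*F*u_\lambda - u_\lambda*F - F*u_\lambda + F}_\infty\) in one stroke by applying Lemma~\ref{lem:convo-func:supp:bisec}(\ref{lem:uni-norm-bisec}) to the bisection-supported section \(u_\lambda*f-f\), rather than computing fibrewise and repackaging; note also that the self-adjointness of \((u_\lambda)\) you invoke is not actually needed, since \(\norm{Fu_\lambda^*-F}=\norm{u_\lambda F - F}\to 0\) for any approximate unit.
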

\begin{proof}
  (1): As \(\Contc(G;\A)\) is an inductive limit
  of~\(\Contc(U;\A|_U)\) for bisections~\(U\), proving the claim
  for~\(\Contc(U; \A|_U)\) is sufficient where~\(U\) is a
  bisection. Fix such~\(U\) and let \(f\in \Contc(U; \A|_{U})\).  Note
  that
  \(\supp(u_{\lambda}*f)\subseteq \supp(u_{\lambda})\supp(f) \subseteq
  \supp(f)\). We apply Lemma~\ref{lem:convo-func:supp:bisec}(4) for
  the bisection supported section \(u_{\lambda}*f -f\) and get
  \begin{multline*}
    \norm{u_{\lambda}*f-f}^2_\infty =
    \norm{(u_{\lambda}*f-f)*(u_{\lambda}*f-f)^*}_\infty
    = \norm{(u_{\lambda}*f-f)*(f^**u_{\lambda}-f^*)}_\infty \\
    = \norm{ u_{\lambda}*(f*f^*)*u_{\lambda} - u_{\lambda}*(f*f^*) -
      (f*f^*)*u_{\lambda} - f* f^*}_\infty.
  \end{multline*}
  As \(f*f^*\in \Contc(\base;\A|_{\base})\) and
  \((u_{\lambda})_{\lambda \in \Index}\) is an approximate unit
  of~\(\Contz(\base;\A|_{\base})\), the limit of last term is~\(0\). Therefore, \(u_{\lambda}*f\to f\) in the
  inductive limit topology.
  \smallskip

  \noindent(2) Follows immediately from first claim of current lemma
  as \(u_{\lambda}*f \in \Contc(\base;\A|_{\base})*\Contc(G;\A)\) for
  all \(f\in \Contc(G;\A)\) and \(\lambda \in \Index\).
\end{proof}

\noindent The second claim of last lemma can also be proved
using~\cite[Proposition~C24]{Williams2007Crossed-product-Cst-Alg}
without appealing to the approximate identity argument.

\subsection{Pre-representations of \(\Contc(G;\A)\) }
\label{sec:pre-rep}

A \Star\nb-representation of \(\Contc(G;\A)\) that is continuous in
the inductive limit topology is a obviously a
pre-representation. Conversely, assume that a pre-representation
\(L\colon \Contc(G;\A) \to \mathrm{Lin}(\Hils_{0})\) of
\(\Contc(G;\A)\) on a Hilbert space~\(\Hils\) is given. We shall
extend~\(L\) to a \Star\nb-representation of \(\Contc(G;\A)\)
on~\(\Hils\) that is continuous in the inductive limit topology in
Proposition~\ref{prop:pre-rep-to-rep-of-C0-X} of current section. This
shall show the all four notions of representation of \(\Contc(G;\A)\),
described in page~\pageref{it:alg-rep}, are equivalent.

Here is a notation: let~\(1_{\Mult(\Contz(\base; \A|_{\base}))}\)
denote the identity in the multiplier algebra
of~\(\Contz(\base;\A|_{\base})\). We fix an approximate
unit~\((u_{\lambda})_{\lambda \in \Index}\)
of~\(\Contz(\base;\A|_{\base})\) consisting of compactly supported
sections.

\begin{observation}\label{obs:pre-rep-appr-unit}
  Let \(\xi,\eta\in \Hils_{0}\), and \(f,g,k, h\in \Contc(G;\A)\),
  then
  \begin{align}
    \lim_{\lambda} \binpro{L(u_{\lambda}*f)\xi}{L(u_{\lambda}*k)\eta} &=
                                                                        \binpro{L(f)\xi}{L(k)\eta} \label{obs:pre-rep-computation-obs-1}\\
    \lim_{\lambda} \binpro{L(f*u_{\lambda})\xi}{L(k*u_{\lambda})\eta} &=
                                                                        \binpro{L(f)\xi}{L(k)\eta} \label{obs:pre-rep-computation-obs-2}\\
    \lim_{\lambda} \binpro{L(f*u_{\lambda}*g)\xi}{L(k*u_{\lambda}*h)\eta} &=
                                                                            \binpro{L(f*g)\xi}{L(k*h)\eta}.\label{obs:pre-rep-computation-obs-3}
  \end{align}
  For the first equation, we note that due to the adjointability of a
  pre-representation, the left hand side equals
  \(\lim_{\lambda}
  \inpro{L((u_{\lambda}*g)^**(u_{\lambda}*f))\xi}{\eta}\). Lemma~\ref{lem:mult-by-bdd-sections}(1)
  implies that \(u_{\lambda}*f\to f\) and \(g^**u_{\lambda}\to g\) in
  the inductive limit topology. Plus, the convolution is continuous in
  the inductive limit topology. Therefore, the continuity of a
  pre-representation implies that
  \(\lim_{\lambda}
  \binpro{L((u_{\lambda}*g)^**(u_{\lambda}*f))\xi}{\eta} = \binpro{L(
    g^* *f)\xi}{\eta} = \binpro{L(f)\xi}{L(g)\eta}\).
  Equation~\eqref{obs:pre-rep-computation-obs-2} is justified
  similarly.  The last one,
  Equation~\eqref{obs:pre-rep-computation-obs-3}, follows from
  Equation~\eqref{obs:pre-rep-computation-obs-2} by noticing that
  \begin{equation*}
    \binpro{L(f*u_{\lambda}*g)\xi}{L(k*u_{\lambda}*h)\eta} = \binpro{L(f*u_{\lambda})
      \bigl(L(g)\xi\bigr)}{L(k*u_{\lambda}) \bigl(L(h)\eta\bigr)}
  \end{equation*}
  where \(L(g)\xi, L(h)\eta\in \Hils_{00}\subseteq \Hils_0\).
\end{observation}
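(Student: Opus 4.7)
The three identities all concern the stability of matrix coefficients of the pre-representation under insertion of the approximate unit $(u_\lambda)$, and my plan is to handle them in order with~\eqref{obs:pre-rep-computation-obs-3} reducing to~\eqref{obs:pre-rep-computation-obs-2}, while both~\eqref{obs:pre-rep-computation-obs-1} and~\eqref{obs:pre-rep-computation-obs-2} reduce to the adjointability axiom together with the ILT-continuity of convolution, of involution, and of the matrix coefficients of $L$.

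For~\eqref{obs:pre-rep-computation-obs-1}, I would first use adjointability and the fact that $L$ is a homomorphism of complex algebras to fold the two $L$-factors into one, rewriting
\[
\binpro{L(u_{\lambda}*f)\xi}{L(u_{\lambda}*k)\eta} = \binpro{L\bigl((u_{\lambda}*k)^**(u_{\lambda}*f)\bigr)\xi}{\eta}.
\]
Then I would argue that the argument of $L$ on the right converges to $k^**f$ in the inductive limit topology: Lemma~\ref{lem:mult-by-bdd-sections}(1) gives $u_{\lambda}*f\to f$ and $u_{\lambda}*k\to k$, continuity of involution in the ILT gives $(u_{\lambda}*k)^*\to k^*$, and joint ILT-continuity of convolution closes the loop. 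The continuity clause in Definition~\ref{def:pre-rep} then yields the limit $\binpro{L(k^**f)\xi}{\eta}$, which unfolds by adjointability back to $\binpro{L(f)\xi}{L(k)\eta}$.

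For~\eqref{obs:pre-rep-computation-obs-2} the same argument works, but only after I establish the right-sided analogue of Lemma~\ref{lem:mult-by-bdd-sections}(1), namely $f*u_{\lambda}\to f$ in the ILT for every $f\in\Contc(G;\A)$. This is the main technical wrinkle. I would prove it by repeating the computation used for that lemma with $u_{\lambda}*f-f$ replaced by $f*u_{\lambda}-f$: after reducing to $f$ supported on a bisection and applying Lemma~\ref{lem:convo-func:supp:bisec}(4), one expands
\[
\norm{f*u_{\lambda}-f}_\infty^2 = \norm{u_{\lambda}*f^**f*u_{\lambda} - u_{\lambda}*f^**f - f^**f*u_{\lambda} + f^**f}_\infty,
\]
and since $f^**f\in\Contc(\base;\A|_{\base})$ by Lemma~\ref{lem:convo-func:supp:bisec}(3), each term converges via the approximate unit property in $\Contz(\base;\A|_{\base})$.

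Finally, for~\eqref{obs:pre-rep-computation-obs-3}, I would reduce to~\eqref{obs:pre-rep-computation-obs-2} by the homomorphism property, writing
\[
\binpro{L(f*u_{\lambda}*g)\xi}{L(k*u_{\lambda}*h)\eta} = \binpro{L(f*u_{\lambda})L(g)\xi}{L(k*u_{\lambda})L(h)\eta},
\]
and observing that $L(g)\xi$ and $L(h)\eta$ lie in $\Hils_{00}\subseteq\Hils_0$, so~\eqref{obs:pre-rep-computation-obs-2} applies verbatim to these vectors, delivering $\binpro{L(f)L(g)\xi}{L(k)L(h)\eta}=\binpro{L(f*g)\xi}{L(k*h)\eta}$ as required.
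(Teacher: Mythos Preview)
Your proposal is correct and follows essentially the same route as the paper: fold the two $L$-factors into one via adjointability, invoke Lemma~\ref{lem:mult-by-bdd-sections}(1) together with ILT-continuity of convolution and of the matrix coefficients, and reduce~\eqref{obs:pre-rep-computation-obs-3} to~\eqref{obs:pre-rep-computation-obs-2} via the homomorphism property and $L(g)\xi,L(h)\eta\in\Hils_{00}$. Your treatment is in fact slightly more careful than the paper's, since you explicitly establish the right-sided convergence $f*u_{\lambda}\to f$ needed for~\eqref{obs:pre-rep-computation-obs-2}, whereas the paper only gestures at it with ``justified similarly.''
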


\begin{lemma}
  \label{lem:sqrt-of-q}
  For a section \(h\in \Contc(U;\A|_{U})\) where \(U\) is a bisection, define
  \[
    q \defeq 1_{ \Mult(\Contz(\base; \A|_{\base}))} \cdot \norm{h
    }^2_\infty - (h^** h) \quad \text {and } \quad q_{\lambda} =
    u_{\lambda} q u_{\lambda}
  \]
  where \(q\in \Mult(\Contz(\base; \A|_{\base}))\) and
  \(q_{\lambda}\in \Contz(\base; \A|_{\base})\). The following
  statements hold:
  \begin{enumerate}
  \item \(q\) is a positive element
    in~\(\Mult(\Contz(\base; \A|_{\base}))\).
  \item \(q_{\lambda}\) is a positive element
    in~\(\Contz(\base; \A|_{\base})\) for \(\lambda \in \Index\).
  \item For a fixed \(v\in \Hils_{00}\),
    \[
      \lim_{\lambda} (\norm{L(\norm{h}_\infty u_{\lambda})v}^2 -
      \norm{ L(h*u_{\lambda})v}^2) = (\norm{h}_\infty \norm{v})^2 -
      \norm{ L(h)v}^2.
    \]
  \end{enumerate}
\end{lemma}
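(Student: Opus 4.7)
\medskip

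\noindent\textbf{Proof proposal.} I will prove the three claims in order, treating parts (1) and (2) as short applications of unital $\Cst$-algebra facts and then using Observation~\ref{obs:pre-rep-appr-unit} to handle the convergence statement in (3).

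For (1), the plan is to work inside the unital $\Cst$-algebra $\Mult(\Contz(\base;\A|_{\base}))$. First I observe that $h^* *h$ lies in $\Contc(s(U);\A|_{s(U)})\subseteq \Contz(\base;\A|_{\base})$ by Lemma~\ref{lem:convo-func:supp:bisec}(\ref{lem:inv-with-adj}), and is positive and self-adjoint there (it has the form ``$x^*x$''). By Lemma~\ref{lem:convo-func:supp:bisec}(\ref{lem:uni-norm-bisec}) its norm equals $\norm{h}_\infty^2$. Either by a direct spectral-radius argument or by applying Lemma~\ref{lem:positive-element} to $a=(h^* *h)^{1/2}$ inside the multiplier algebra, we get $\norm{h}_\infty^2\cdot 1 - h^* *h\geq 0$, which is exactly $q\geq 0$.

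For (2), since $q\geq 0$, write $q=b^*b$ with $b\defeq q^{1/2}\in \Mult(\Contz(\base;\A|_{\base}))$. Because $u_\lambda$ is self-adjoint,
\[
q_\lambda = u_\lambda q u_\lambda = (bu_\lambda)^*(bu_\lambda)\geq 0,
\]
and $q_\lambda\in\Contz(\base;\A|_{\base})$ because $u_\lambda\in\Contz(\base;\A|_{\base})$ and multiplication by elements of the multiplier algebra preserves the ideal.

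For (3), I would write $v=\sum_{i=1}^n L(f_i)\xi_i$ for some $f_i\in\Contc(G;\A)$ and $\xi_i\in\Hils_0$, using that $v\in\Hils_{00}$. Then $L(\norm{h}_\infty u_\lambda)v=\norm{h}_\infty \sum_i L(u_\lambda * f_i)\xi_i$ and $L(h*u_\lambda)v=\sum_i L(h*u_\lambda*f_i)\xi_i$, so expanding the squared norms as double sums of inner products reduces the claim to:
\[
\lim_\lambda \binpro{L(u_\lambda * f_i)\xi_i}{L(u_\lambda * f_j)\xi_j} = \binpro{L(f_i)\xi_i}{L(f_j)\xi_j},
\]
\[
\lim_\lambda \binpro{L(h*u_\lambda * f_i)\xi_i}{L(h*u_\lambda * f_j)\xi_j} = \binpro{L(h*f_i)\xi_i}{L(h*f_j)\xi_j}.
\]
The first is Observation~\ref{obs:pre-rep-appr-unit}, equation~\eqref{obs:pre-rep-computation-obs-1}, with the roles of $f,k$ played by $f_i,f_j$; the second is equation~\eqref{obs:pre-rep-computation-obs-3} applied with $f=h$, $k=h$, $g=f_i$, $h=f_j$ (renaming). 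Summing over $i,j$ converts the two double sums into $\norm{v}^2$ and $\norm{L(h)v}^2$ respectively, and subtracting gives the stated identity.

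The only mildly non-routine point is (1): one must recognise $h^* *h$ as a positive self-adjoint element of the $\Cst$-algebra $\Contz(\base;\A|_{\base})$ with norm exactly $\norm{h}_\infty^2$, which is precisely what Lemma~\ref{lem:convo-func:supp:bisec}(\ref{lem:inv-with-adj})--(\ref{lem:uni-norm-bisec}) provide; once that is in hand, everything else reduces either to standard $\Cst$-algebra manipulations in the multiplier algebra or to termwise application of Observation~\ref{obs:pre-rep-appr-unit}.
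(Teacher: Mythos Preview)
Your proposal is correct and follows essentially the same route as the paper: parts (1) and (2) use Lemma~\ref{lem:convo-func:supp:bisec}(\ref{lem:inv-with-adj})--(\ref{lem:uni-norm-bisec}) together with Lemma~\ref{lem:positive-element} and the square-root trick in the multiplier algebra, and part (3) expands $v=\sum_i L(f_i)\xi_i$ and reduces to Equations~\eqref{obs:pre-rep-computation-obs-1} and~\eqref{obs:pre-rep-computation-obs-3} of Observation~\ref{obs:pre-rep-appr-unit}. Your application of Lemma~\ref{lem:positive-element} to $a=(h^**h)^{1/2}$ is in fact a slightly cleaner way of invoking that lemma than the paper's direct appeal.
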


\begin{proof}
  (1):
  Lemma~\ref{lem:convo-func:supp:bisec}(\ref{lem:inv-with-adj})--(\ref{lem:uni-norm-bisec})
  implies that \(h^**h\in \Contc(\base; \A|_{\base})\) with
  \( \norm{h^* * h}_\infty = \norm{h}_\infty^2 \).  Now
  Lemma~\ref{lem:positive-element} implies
  that~\(q = 1_{ \Mult(\Contz(\base; \A|_{\base}))} \cdot
  \norm{h^**h}_\infty -(h^**h) \) is a positive element
  in~\(\Mult(\Contz(\base; \A|_{\base}))\).

  \noindent (2): Let \(k\) be the positive square root of~\(q\) in the
  multiplier algebra. Then
  \(u_{\lambda}qu_{\lambda} = (k u_{\lambda})^*(ku_{\lambda})\).

  \noindent (3): For~\(v\in \Hils_{00}\), compute the difference
  \begin{multline*}
    \norm{L(\norm{h}_\infty u_{\lambda})v}^2 - \norm{
      L(h*u_{\lambda})v}^2 \\= \big\langle L(\norm{h}_\infty
    u_{\lambda})v \mathbin{,} L(\norm{h}_\infty u_{\lambda})v
    \big\rangle - \binpro{ L(h*u_{\lambda})v}{ L(h*u_{\lambda})v}.
  \end{multline*}

  \noindent As~\(v\in \Hils_{00}\), we can write
  \(v=\sum_{i=1}^k L(f_i)\xi_i\) for some \(f_i\in \Contc(G;\A)\),
  \(\xi_i\in \Hils_0\) and \(k\in \N\). Substitute this value of~\(v\)
  in last subtraction, and compute it further:
  \begin{multline*}
    \binpro{L(\norm{h}_\infty u_{\lambda}) \sum_{i=1}^k L(f_i)\xi_i}{
      L(\norm{h}_\infty u_{\lambda}) \sum_{i=1}^k L(f_i)\xi_i} \\ -
    \binpro{L(h*u_{\lambda}) \sum_{i=1}^k L(f_i)\xi_i
    }{L(h*u_{\lambda})\sum_{i=1}^k
      L(f_i)\xi_i}\\
    = \sum_{i,j} \Bigl(\norm{h}_\infty^2\binpro{L(u_{\lambda} *
      f_i)\xi_i}{L(u_{\lambda}*f_j)\xi_j} - \binpro{ L(h*u_{\lambda}*
      f_i)\xi_i}{ L(h*u_{\lambda} * f_j)\xi_j}\Bigr).
  \end{multline*}
  Using Equations~\eqref{obs:pre-rep-computation-obs-1}
  and~\eqref{obs:pre-rep-computation-obs-3}, the limit of preceding
  subtraction over~\(\Index\) can be computed to be

  \begin{multline*}
    \sum_{i,j} \Big(\norm{h}_\infty^2 \binpro{L(f_i)\xi_i}
    {L(f_j)\xi_j} -
    \binpro{ L(h*f_i)\xi_i}{ L(h*f_j)\xi_j}\Bigr)\\
    = \sum_{i,j} \Big(\norm{h}_\infty^2 \binpro{L(f_i)\xi_i}
    {L(f_j)\xi_j} -
    \binpro{ L(h) L(f_i)\xi_i}{ L(h)L(f_j)\xi_j}\Bigr)\\
    = \norm{h}_\infty^2 \Binpro{\sum_i^kL(f_i)\xi_i}
    {\sum_i^kL(f_i)\xi_i} - \Binpro{ L(h) \sum_i^kL(f_i)\xi_i}{ L(h)
      \sum_i^kL(f_i)\xi_i}
  \end{multline*}
  which proves the claim.
\end{proof}

\begin{proposition}
  \label{prop:pre-rep-to-rep-of-C0-X}
  Suppose that a Fell bundle \(p\colon \A \to G\) over an \etale\
  groupoid~\(G\) is given.  Suppose that a pre-representation
  \(L\colon \Contc(G;\A) \to \mathrm{Lin}(\Hils_0)\) is given where
  \(\Hils_0 \subseteq \Hils\) is a dense subspace. Then~\(L\) can be
  extended to a
  representation~\(M\colon \Contc(G, \A) \to \Bound(\Hils)\) that is
  continuous in the inductive limit topology. To be precise, the
  following holds: let \(h\in \Contc(G; \A)\) and~\(\Hils_{00}\) have
  the same meaning as in Definition~\ref{def:pre-rep}. For this~\(h\),
  define the linear operator~\(M'(h)\) on the inner product
  space~\(\Hils_{00}\) which is given on generating
  vectors~\(L(f)\xi\in \Hils_{00}\) by
  \begin{equation*}
    M'(h)(L(f)\xi) = L( h * f)\xi\label{eq:rep-from-pre-rep};
  \end{equation*}
  here \(f\in \Contc(G; \A)\) and \(\xi\in \Hils_0\). Then:
  \begin{enumerate}
  \item the linear operator \(M'(h)\) extends to a bounded linear
    operator \(M(h)\colon \Hils\to \Hils\).
  \item \(M\colon \Contc(G;\A) \to \Bound(\Hils)\) given by
    \(M\colon h \mapsto M(h)\) is a nondegenerate representation that
    is continuous in the inductive limit topology on~\(\Contc(G;\A)\).
  \item \(M\) is an extension of~\(L\), that is,
    \(M(h)|_{\Hils_{0}} = L(h)\).
  \end{enumerate}
\end{proposition}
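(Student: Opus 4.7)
The formula defining \(M'(h)\) can be rewritten as \(M'(h) = L(h)|_{\Hils_{00}}\), since the multiplicativity of \(L\) gives \(L(h)(L(f)\xi) = L(h*f)\xi\) on the generators of \(\Hils_{00}\). Hence well-definedness is automatic, and the actual content of (1) is the boundedness of \(L(h)|_{\Hils_{00}}\), with a bound that depends continuously on \(h\) in the inductive-limit sense, which also handles the continuity claim in (2).

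The plan is to first establish the sharp estimate \(\norm{L(h)v} \leq \norm{h}_\infty \norm{v}\) for \(v \in \Hils_{00}\) when \(h\) is supported on a single bisection \(U\). By Lemma~\ref{lem:sqrt-of-q}(3),
\begin{equation*}
  \bigl(\norm{h}_\infty \norm{v}\bigr)^2 - \norm{L(h)v}^2 = \lim_{\lambda} \binpro{L(q_\lambda)v}{v},
\end{equation*}
so it suffices to show that each \(\binpro{L(q_\lambda)v}{v} \geq 0\). Now \(q_\lambda \geq 0\) in the \(\Cst\)\nb-algebra \(\Contz(\base;\A|_{\base})\), and its positive square root \(q_\lambda^{1/2}\), computed there, is a continuous section vanishing outside \(\supp u_\lambda\), hence \(q_\lambda^{1/2} \in \Contc(\base;\A|_{\base})\). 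Since \(q_\lambda^{1/2}\) is self-adjoint, the multiplicativity and adjointability of the pre-representation yield
\begin{equation*}
  \binpro{L(q_\lambda)v}{v} = \binpro{L(q_\lambda^{1/2})v}{L(q_\lambda^{1/2})v} = \norm{L(q_\lambda^{1/2})v}^2 \geq 0.
\end{equation*}

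For a general \(h = \sum_{i=1}^n h_i\) with \(h_i \in \Contc(U_i;\A|_{U_i})\), the triangle inequality gives \(\norm{L(h)v} \leq \bigl(\sum_i \norm{h_i}_\infty\bigr)\norm{v}\). A subordinate partition-of-unity argument shows that any \(h\) with support in a fixed compact set \(K \subseteq G\) admits such a decomposition with \(\sum_i \norm{h_i}_\infty \leq C_K \norm{h}_\infty\), delivering the inductive-limit continuity of \(M\). Density of \(\Hils_{00}\) in \(\Hils\) then extends \(M'(h)\) uniquely to \(M(h) \in \Bound(\Hils)\), proving (1). For (2), multiplicativity \(M(f*g) = M(f)M(g)\) and the \Star\nb-property \(M(f)^* = M(f^*)\) hold on \(\Hils_{00}\) by the corresponding properties of \(L\) and extend to \(\Hils\) by continuity; nondegeneracy is immediate since \(M(\Contc(G;\A))\Hils \supseteq L(\Contc(G;\A))\Hils_0 = \Hils_{00}\), which is already dense in \(\Hils\).

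For (3), given \(\xi \in \Hils_0\) and any \(\eta \in \Hils_{00}\), combining the adjointability of \(L\), the agreement \(M(h^*)|_{\Hils_{00}} = L(h^*)|_{\Hils_{00}}\), and the \Star\nb-property \(M(h)^* = M(h^*)\) yields
\begin{equation*}
  \binpro{L(h)\xi}{\eta} = \binpro{\xi}{L(h^*)\eta} = \binpro{\xi}{M(h^*)\eta} = \binpro{M(h)\xi}{\eta},
\end{equation*}
so density of \(\Hils_{00}\) in \(\Hils\) forces \(L(h)\xi = M(h)\xi\). The main obstacle is the bisection-support estimate in the second paragraph: it requires the pre-representation to respect positivity on the sub\nb-\(\Cst\)\nb-algebra \(\Contc(\base;\A|_{\base})\), and the clean resolution is that compactly supported positive sections of an upper semicontinuous \(\Cst\)\nb-bundle admit compactly supported continuous square roots via the bundle's functional calculus.
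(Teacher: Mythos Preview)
Your proof is correct and follows essentially the same strategy as the paper: both reduce to the bisection case, invoke Lemma~\ref{lem:sqrt-of-q} to express \((\norm{h}_\infty\norm{v})^2-\norm{L(h)v}^2\) as a limit over the approximate unit, and then use the compactly supported positive square root \(q_\lambda^{1/2}=s_\lambda\) together with adjointability to see each pre-limit term is a squared norm. Your presentation is slightly more streamlined than the paper's---you observe directly that the pre-limit term is \(\binpro{L(q_\lambda)v}{v}=\norm{L(q_\lambda^{1/2})v}^2\), whereas the paper first expands \(v=\sum_i L(f_i)\xi_i\) and arrives at the same conclusion after a longer chain of identities; and your argument for~(3) via adjointability against test vectors in~\(\Hils_{00}\) is a bit more careful than the paper's one-line density claim, which implicitly needs exactly the pairing argument you wrote.
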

\begin{proof}
  \noindent (1): We prove it in two steps.  \medskip
  
  \noindent Step~I: Assume that \(\supp(h)\) lies
  inside an open bisection. We intend to prove that
  \begin{equation}
    \norm{M'(h)v}\leq \norm{h}_\infty \norm{v} \quad \text{ for }
    v \in \Hils_{00}.\label{eq:bdd-op-on-M00}
  \end{equation}
  If last inequality is proved, then due to the density of
  \(\Hils_{00}\subseteq \Hils\), \(M'(h)\) can be extended to a
  bounded operator \(M(h)\colon \Hils \to \Hils\).
  
  For proving Equation~\eqref{eq:bdd-op-on-M00}, consider the given
  approximate unit~\((u_{\lambda})_{\lambda \in \Index}\)
  of~\(\Contz(\base;\A|_{\base})\) where
  \(u_{\lambda}\in \Contc(\base;\A|_{\base})\)
  (cf.~Corollary~\ref{cor:cc-apprx-unit-general-case}).
  Let~\(v\in \Hils_{00}\), so that~\(v = \sum_{i=1}^k L(f_i)\xi_i\)
  for some~\(k\in \N\), \(f_i\in \Contc(G;\A)\)
  and~\(\xi_i\in \Hils_0\).  Note that Lemma~\ref{lem:sqrt-of-q}(3)
  implies that
  \[
    (\norm{h}_\infty \norm{v})^2 - \norm{M'(h)v}^2 = \lim_{\lambda}
    (\norm{h}_\infty \norm{L(u_{\lambda})v})^2 -
    \norm{L(h*u_{\lambda})v}^2.
  \]
  Therefore, for proving Equation~\eqref{eq:bdd-op-on-M00}, it
  suffices to show that
  \((\norm{h}_\infty \norm{L(u_{\lambda})v})^2 -
  \norm{L(h*u_{\lambda})v}^2 \geq 0\) for all \(\lambda \in
  \Index\). For \(\lambda \in \Index\), we compute this differences as
  follows:
  \begin{multline*}
    (\norm{h}_\infty \norm{L(u_{\lambda})v})^2 -
    \norm{L(h*u_{\lambda})v}^2 = \norm{h}^2_\infty
    \norm{\sum_{i=1}^k L(u_{\lambda}*f_i)\xi_i}^2 - \norm{ \sum_{i=1}^k L(h*u_{\lambda} * f_i)\xi_i}^2\\
    = \Binpro{\norm{h}^2_\infty \left(\sum_{i=1}^k
        L(u_{\lambda}*f_i)\xi_i\right)}{\sum_{i=1}^k L(u_{\lambda}
      *f_i)\xi_i} - \Binpro{ \sum_{i=1}^k L(h * u_{\lambda}*
      f_i)\xi_i}{ \sum_{i=1}^k L(h * u_{\lambda}* f_i)\xi_i}.
  \end{multline*}

  \noindent Now use the adjointability of the pre-representation, so
  that the last term becomes ~
  \begin{multline}\label{eq:pre-rep-to-rep-of-C0-X-1}
    \sum_{i, j=1}^k \Binpro{ ( L(\norm{h}^2_\infty
      u_{\lambda}*u_{\lambda}* f_i))\xi_i} {L(f_j)\xi_j}\\ - \sum_{i,
      j=1}^k \Binpro{ L(u_{\lambda}*h^* * h * u_{\lambda} *
      f_i)\xi_i}{L(f_j)\xi_j }.
  \end{multline}

  \noindent Notice that \(h^**h \in \Contc(\base;\A|_{\base})\). As a
  consequence
  \(u_{\lambda}*(h^**h) *u_{\lambda} = u_{\lambda}\cdot (h^**h) \cdot
  u_{\lambda}\in \Contc(\base;\A|_{\base})\).  Moreover,
  Equation~\eqref{eq:conv} implies that the convolutions
  \begin{align*}
    ( u_{\lambda}*(h^**h)*u_{\lambda}) * f_i &= (u_{\lambda}\cdot (h^**h) \cdot u_{\lambda})\circ r \cdot
                                               f_i = (u_{\lambda}\circ r \cdot
                                               (h^**h)\circ r \cdot
                                               u_{\lambda}\circ r)
                                               \cdot f_i\; ;\\
    \text{and }\quad    u_{\lambda}^2 * f_i &= (u_{\lambda}^2 \circ r) \cdot f_i = (u_{\lambda}
                                              \circ r)^2 \cdot f_i.
  \end{align*}
  \noindent
  Therefore, the difference~\eqref{eq:pre-rep-to-rep-of-C0-X-1} can be
  written as
  \begin{multline*}
    \sum_{i, j=1}^k \Binpro{L( \norm{h}^2_\infty \cdot
      (u_{\lambda}\circ r)^2\cdot f_i)\xi_i}{L(f_j)\xi_j} \\- \sum_{i,
      j=1}^k \Binpro{L( (u_{\lambda}\circ r)\cdot (h^**h)\circ r
      \cdot(u_{\lambda}\circ r) \cdot f_i)\xi_i}{L(f_j)\xi_j}
  \end{multline*}
  Using the bilinearity of the inner product, the last term can be
  written as
  \begin{multline*}
    \sum_{i, j=1}^k \Binpro{L\left(\left(\norm{h}^2_\infty \cdot
          (u_{\lambda}\circ r)^2 - (u_{\lambda}\circ r)\cdot
          (h^**h)\circ r \cdot(u_{\lambda}\circ r)
        \right)\cdot f_i\right)\xi_i}{L(f_j)\xi_j} \\
    = \sum_{i, j=1}^k \Binpro{L\left(\left(u_{\lambda}\circ r
          \left(\norm{h}^2_\infty\cdot 1_{
              \Mult(\Contz(\base;\A|_{\base}))} - (h^**h) \circ r
          \right) u_{\lambda}\circ r\right) \cdot
        f_i\right)\xi_i}{L(f_j)\xi_j}
  \end{multline*}
  where
  \(1_{\Mult(\Contz(\base); \A|_{\base})}\in \Mult(\Contz(\base);
  \A|_{\base})\) is the unit in the multiplier algebra. Following
  Lemma~\ref{lem:sqrt-of-q}, put
  \(q =\norm{h}^2_\infty\cdot 1_{ \Mult(\Contz(\base;\A|_{\base}))} -
  (h^**h) \in \Mult(\Contz(\base); \A|_{\base})\) and
  \(q_{\lambda} = u_{\lambda} q u_{\lambda}\in
  \Contc(\base;\A|_{\base})\). Then the last term in above computation
  becomes
  \[
    \sum_{i, j=1}^k \binpro{L\left((q_{\lambda} \circ r) \cdot
        f_i\right)\xi_i}{L(f_j)\xi_j}.
  \]
  As Lemma~\ref{lem:sqrt-of-q} says, \(q_{\lambda}\) is a positive
  element
  in~\(\Contc(\base;\A|_{\base}) \subseteq
  \Contz(\base;\A|_{\base})\).
  Let~\(s_{\lambda}\in \Contz(\base;\A|_{\base})\) be its unique
  positive square root;
  then~\(s_{\lambda}\in \Contc(\base;\A|_{\base})\). By substituting
  \(q_{\lambda}=s_{\lambda}^2\) last term of main computation becomes
  \begin{multline*}
    \sum_{i, j=1}^k \binpro{L((s_{\lambda}\circ r) \cdot
      f_i)\xi_i}{L((s_{\lambda}\circ r) \cdot f_j)\xi_j} = \sum_{i,
      j=1}^k
    \binpro{L(s_{\lambda}* f_i)\xi_i}{L(s_{\lambda} * f_j)\xi_j}\\
    = \binpro{\sum_{i=1}^nL(s_{\lambda}*
      f_i)\xi_i}{\sum_{i=1}^nL(s_{\lambda} * f_i)\xi_i} \geq 0.
  \end{multline*}
  
  \noindent Thus Equation~\eqref{eq:bdd-op-on-M00} is proved.
  \smallskip

  \noindent Step~II: Now, suppose \(h\in \Contc(G, \A)\) is given. Write \(h= \sum_{i=1}^p h_i\) where each \(h_i\)
  is supported in a bisection. Now, Equation~\eqref{eq:bdd-op-on-M00}
  implies that
  \(\norm{M'(h)v} \leq (\sum_{i=1}^p \norm{h_i}_\infty ) \norm{v} \leq
  p \norm{h}_\infty \norm{v}\) for \(v\in \Hils_{00}\); here the
  number~\(p\) depends on a representation of~\(h\) as a sum
  of~\(h_i\)'s. But for the fixed representation,~\(p\) is fixed. Now
  we can extend~\(M'(h)\) to a bounded
  operator~\(M(h)\colon \Hils \to \Hils\) using the density of
  \(\Hils_0\subseteq \Hils\).

  Finally, note that \(M(h)\leq p\norm{h}_\infty\). Therefore,
  \(M\colon \Contc(G;\A) \to \Bound(\Hils)\) is continuous in the
  inductive limit topology.  \medskip

  \noindent (3): By construction of~\(M(h)\) in~(1) of the current
  lemma, it is clear that \(M(h)|_{\Hils_{00}} =
  L(h)|_{\Hils_{00}}\). Since \(\Hils_{00}\subseteq \Hils_0\) is
  dense, \(M(h)|_{\Hils_0} = L(h)\).  \medskip
  
  \noindent (2): Let \(h\in \Contc(G;\A)\). Then due to~(3) and the
  adjointability of a pre-representation, \(M(h^*) = M(h)^*\).
  Finally, the nondegeneracy of~\(M\) follows from the nondegeneracy
  of a pre-representation and the fact that the linear subspace
  of~\(\Contc(G;\A)\) spanned by \(\Contc(G;\A)*\Contc(G;\A)\) is
  uniformly dense, cf.~Lemma~\ref{lem:mult-by-bdd-sections}(2).
\end{proof}

For the Fell bundle \(p\colon \A \to G\), define the following
collections:
\begin{enumerate}[leftmargin=*]
\item \(\mathcal{R}_R\), the collection of representations;
\item \(\mathcal{R}_{H}\), the collection of \(I\)\nb-norm bounded
  representations;
\item \(\mathcal{R}_{\mathrm{In}}\) the collection of representations
  continuous in the inductive limit topology;
\item \(\mathcal{R}_P\), the collection of all representations induced
  from pre-representations as in
  Proposition~\ref{prop:pre-rep-to-rep-of-C0-X}.
\end{enumerate}
With this notation, we can state the following theorem:

\begin{theorem}\label{thm:norm-equi}
  Let \(p\colon \A \to G\) be a (not necessarily saturated) Fell
  bundle over an \etale~groupoid \(G\) (not necessarily
  Hausdorff). For \(f\in \Cc(G;\A)\), define\footnote{For ``RRR''
    fans, \(\rho\in \mathcal{R}_R\) is not a coincidence!}
  \begin{align*}
    \norm{f}_R &= \sup_{\rho\in \mathcal{R}_R}\{\norm{\rho(f)} \}, & \norm{f}_{H}
    &= \sup_{\rho\in \mathcal{R}_{H}}\{\norm{\rho(f)} \},\\
    \norm{f}_{\mathrm{In}} &= \sup_{\rho\in \mathcal{R}_{\mathrm{In}}}\{\norm{\rho(f)} \}, & \norm{f}_P &= \sup_{\rho\in \mathcal{R}_P}\{\norm{\rho(f)} \}.
  \end{align*}
  
  \noindent Then
  \(\norm{f}_R = \norm{f}_{H} = \norm{f}_{\mathrm{In}}=
  \norm{f}_{P}\).  And \(\norm{f} \defeq \norm{f}_R\) is a
  \(\Cst\)\nb-norm on \(\Contc(G;\A)\). Moreover, the
  \(\Cst\)\nb-algebra \(\overline{\Contc(G;\A)}^{\norm{\cdot}_R}\) is
  same as the \(\Cst\)\nb-algebra that Muhly--Williams define
  in~\cite{Muhly-Williams2008FellBundle-ME} for saturated Fell bundle
  over locally compact Hausdorff second countable groupoid.
\end{theorem}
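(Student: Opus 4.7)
The proof splits into three parts: (i) equivalence of the four suprema, (ii) verifying that \(\norm{\cdot}_R\) is a \(\Cst\)\nb-norm, and (iii) comparison with the Muhly--Williams construction.

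For (i), the classes \(\mathcal{R}_R\), \(\mathcal{R}_H\), \(\mathcal{R}_{\mathrm{In}}\) coincide set-theoretically by Proposition~\ref{prop:three-reps-are-equi}: every nondegenerate \Star\nb-homomorphism is automatically continuous in the inductive limit topology, hence in the coarser \(I\)\nb-norm topology. This immediately yields \(\norm{f}_R = \norm{f}_H = \norm{f}_{\mathrm{In}}\). To bring in \(\mathcal{R}_P\), note that every \(\rho \in \mathcal{R}_{\mathrm{In}}\) is a pre-representation with \(\Hils_0 = \Hils\), whose extension via Proposition~\ref{prop:pre-rep-to-rep-of-C0-X} is \(\rho\) itself; conversely, that proposition takes each pre-representation to a member of \(\mathcal{R}_{\mathrm{In}}\) whose operator norm at \(f\) equals that of the pre-representation on \(\Hils_{00}\). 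Hence \(\mathcal{R}_P\) and \(\mathcal{R}_{\mathrm{In}}\) produce identical operator norms and \(\norm{f}_P = \norm{f}_{\mathrm{In}}\).

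For (ii), finiteness of \(\norm{f}_R\) is built into the proof of Proposition~\ref{prop:three-reps-are-equi}: decomposing \(f = \sum_{i=1}^n f_i\) with each \(f_i\) supported on a bisection yields \(\norm{\rho(f)} \le \sum_{i=1}^n \norm{f_i}_\infty\) uniformly in \(\rho\in \mathcal{R}_R\). Submultiplicativity, \Star\nb-preservation, and the \(\Cst\)\nb-identity pass from each individual \(\rho\) to the supremum in the standard way. The subtle point is positive definiteness. For this we construct, for each \(x \in \base\), the left regular representation \(\lambda_x\) on the Hilbert \(\A_x\)\nb-module \(\ell^2(G^x;\A)\) of square-summable sections of \(\A\) along the fibre \(G^x\), defined on generators by the convolution formula \((\lambda_x(f)\xi)(\gamma) = \sum_{\alpha\beta=\gamma} f(\alpha)\,\xi(\beta)\). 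A direct verification shows each \(\lambda_x\) is a nondegenerate \(I\)\nb-norm bounded \Star\nb-representation, and composing with a faithful representation of \(\A_x\) places it in \(\mathcal{R}_R\). Given \(f\neq 0\), choose \(\gamma \in G\) with \(f(\gamma) \neq 0\) and apply \(\lambda_{s(\gamma)}\) to a section peaked near \(s(\gamma)\); a short computation shows the result is nonzero, whence \(\norm{f}_R > 0\).

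For (iii), in the saturated, locally compact Hausdorff, second countable case, Muhly--Williams's Renault disintegration theorem in~\cite{Muhly-Williams2008FellBundle-ME} identifies their \(\Cst(G;\A)\) with the completion of \(\Contc(G;\A)\) in the supremum over all \(I\)\nb-norm bounded nondegenerate \Star\nb-representations, i.e.\ in \(\norm{\cdot}_H\); by (i) this equals \(\norm{\cdot}_R\), so the two \(\Cst\)\nb-algebras coincide. The main obstacle I anticipate is step (ii), specifically the construction and analysis of \(\lambda_x\): the Hilbert \(\A_x\)\nb-module structure on \(\ell^2(G^x;\A)\) must be set up carefully for a possibly unsaturated Fell bundle over a possibly non-Hausdorff \etale\ groupoid, and nondegeneracy together with boundedness of the convolution operators must be verified directly without appealing to the disintegration machinery that the paper is avoiding.
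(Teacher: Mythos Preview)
Your argument for (i) matches the paper exactly: Propositions~\ref{prop:three-reps-are-equi} and~\ref{prop:pre-rep-to-rep-of-C0-X} give the equality of all four suprema, and your observation that \(\mathcal{R}_P = \mathcal{R}_{\mathrm{In}}\) as classes of representations is precisely the content of those results.

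For (ii), the paper is much terser: it simply cites \cite[Theorem~9.2.3]{Sims-Szabo-Williams2020Operator-alg-book} for ``a standard argument'' and moves on. Your explicit construction of the regular representations \(\lambda_x\) is the right way to flesh this out for Fell bundles (the cited reference treats only the scalar case), and is essentially Kumjian's construction of the reduced \(\Cst\)\nb-algebra. One notational point: with the paper's convention \(G^x = r^{-1}(x)\), the fibres \(\A_\gamma\) for \(\gamma\in G^x\) do not carry a common \emph{right} \(\A_x\)\nb-module structure; you want \(G_x = s^{-1}(x)\) so that each \(\A_\gamma\) is a right Hilbert \(\A_{s(\gamma)} = \A_x\)\nb-module and your convolution formula lands back in \(\ell^2(G_x;\A)\). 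With that fix the argument goes through.

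For (iii), invoking the disintegration theorem is unnecessary and slightly against the spirit of the paper. Muhly--Williams \emph{define} their \(\Cst(G;\A)\) as the completion in (one of) \(\norm{\cdot}_{\mathrm{In}}\), \(\norm{\cdot}_H\), \(\norm{\cdot}_P\); the disintegration theorem is what relates these to unitary representations of the bundle, not what produces the norm. So the comparison is immediate from (i) once you recall their definition, exactly as the paper says.
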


\begin{proof}
  The equality
  \(\norm{f}_R =\norm{f}_{H} = \norm{f}_{\mathrm{In}} = \norm{f}_P\)
  follows from equivalence of the four notions of representations,
  Propositions~\ref{prop:three-reps-are-equi}
  and~\ref{prop:pre-rep-to-rep-of-C0-X}. The claim that
  \(\norm{\cdot}_R\) is a \(\Cst\)\nb-norm, follows from a standard
  argument, for example, see~\cite[Theorem
  9.2.3]{Sims-Szabo-Williams2020Operator-alg-book}. Muhly and Williams
  use~\(\norm{\cdot}_{\mathrm{In}}\) and~\(\norm{\cdot}_P\) to define
  Fell bundle \(\Cst\)\nb-algebra; we show that these norms agree
  with~\(\norm{\cdot}_R\); this justifies the last claim.
\end{proof}

The \(\Cst\)\nb-algebra \(\overline{\Contc(G;\A)}^{\norm{\cdot}}\) is
the \emph{full} \(\Cst\)\nb-algebra of \(p\colon \A\to G\). This
\(\Cst\)\nb-algebra is denoted by~\(\Cst(G;\A)\).

Assume that in Theorem~\ref{thm:norm-equi}, \(G\) is locally compact
Hausdorff and second countable \etale; and \(\A\) saturated and
separable. Then Muhly and Williams define the full \(\Cst\)\nb-algebra
of~\(\A\) using~\(\mathcal{R}_H, \mathcal{R}_{\mathrm{In}}\) and
\(\mathcal{R}_P\) (and unitary representations of~\(p\)). They show
that these three norms are same. We have added one more
norm~\(\mathrm{R}_R\). The \(\Cst\)\nb-algebra given by last theorem
is same as that of Muhly and Williams.

For all following remarks a Fell bundle \(p\colon \A \to G\) over an
\etale\ groupoid is fixed.

\begin{remark}
  Consider Theorem~\ref{thm:norm-equi} for the trivial Fell bundle
  over an \etale\ groupoid. Then \(\mathcal{R}_R\) is the norm on
  \(\Contc(G)\) that Exel defines
  in~\cite{Exel2008Inverse-semigroup-combinotorial-Cst-alg};
  \(\mathcal{R}_I\) is discussed in~\cite{Renault1980Gpd-Cst-Alg} by
  Renault or~\cite{Sims-Szabo-Williams2020Operator-alg-book} by Sims;
  and \(\mathcal{R}_{\mathrm{In}}\) and \(\mathcal{R}_{P}\) are the
  norms Renault defines
  in~~\cite{Renault1985Representations-of-crossed-product-of-gpd-Cst-Alg}
  (and proves are equal). Theorem~\ref{thm:norm-equi} shows that all
  these norms, and hence corresponding \(\Cst\)\nb-algebras are same.
\end{remark}

\begin{remark}\label{rem:section-alg-of-base}
  Now \(\Contc(\base;\A|_{\base})\) is a \Star-algebra of
  \(\Contc(G;\A)\). Due to the uniqueness of \(\Cst\)\nb-norm,
  \(\overline{\Contc(\base;\A|_{\base})}^{\norm{\cdot}} \subseteq
  \Cst(G,\A)\) must be~\(\Contz(\base;\A|_{\base})\). Thus,
  the~\(\Contz(\base)\)-algebra \(\Contz(\base;\A|_{\base})\) is a
  \(\Cst\)\nb-subalgebra of \(\Cst(G;\A)\). In particular, for
  \(f\in \Contz(\base;\A|_{\base})\), the \(\Cst\)\nb-norm
  \(\norm{f} = \norm{f}_\infty\).
\end{remark}

\begin{remark}
  For \(f\in \Contc(G;\A)\) be supported in a bisection, the
  \(\Cst\)\nb-norm
  \[
    \norm{f}^2 = \norm{f*f^*} = \norm{f*f^*}_\infty =
    \norm{f}_\infty^2
  \]
  where the first equality is the \(\Cst\)\nb-identity, the second
  equality is because \(f*f^* \in \Contz(\base; \A|_{\base})\) and the
  last one is Lemma~\ref{lem:convo-func:supp:bisec}(4). Moreover,
  Example~\ref{exa:I-norm-on-bisection} says that
  \(\norm{f}_I = \norm{f}_\infty\). Therefore, for a section supported
  on a bisection, the full \(\Cst\)\nb-norm equals the \(I\)\nb-norm
  or uniform norm.
\end{remark}

\begin{remark}
  Let \((u_\lambda)\) be an approximate unit of
  \(\Contz(\base; \A|_{\base})\) consisting of compactly supported
  sections. Remark~\ref{rem:section-alg-of-base} and
  Lemma~\ref{lem:mult-by-bdd-sections}(1) imply that~\((u_\lambda)\)
  is also an approximate unit of~\(\Cst(G;\A)\).
\end{remark}

\begin{example}
  \label{exa:Cst-bundle-over-unit}
  Let \(G\) be locally compact Hausdorff \etale\ groupoid and
  \(\A\to \base\) an upper semicontinuous bundle of
  \(\Cst\)\nb-algebras. Consider the bundle \(\B\to G\) which is
  \(\A\) over~\(\base\) and has the zero fibres otherwise. Then~\(\B\)
  is upper semicontinuous, and, in fact, a Fell bundle over~\(G\). The
  Fell bundle~\(\B\) is not a saturated bundle unless~\(\A\) consists
  of the zero \(\Cst\)\nb-algebras. In this case,
  \(\Contc(G; \B) = \Contc(\base; \A)\) and
  \(\Cst(G; \B) \iso \Contz(\base; \A)\).
\end{example}

\begin{example}
  Let \(G\) be the groupoid of trivial equivalence relation on the set
  \(\{0, 1\}\), that is, \(G = \{(0, 0), (0,1), (1,0),(1,1)\}\). Then
  \(G\) is an \etale\ groupoid.  Let \(A\) and \(B\) be two
  \(\Cst\)\nb-algebras and \(X\) a (not necessarily full) Hilbert
  \(A\)-\(B\)\nb-bimodule. Let \(X^*\) be the dual module of \(X\).

  Consider the Fell bundle \(p\colon \A \to G\) as follows: the fibres
  are \(\A_{(0,0)} = A, \A_{(0,1)} = X, \A_{(1,0)} = X^*\) and
  \(\A_{(1,1)} = B\). The multiplications of Fell bundle
  \(\A_{(0,1)} \times \A_{(1,0)} \to \A_{(0,0)}\) is defined by
  \(\xi \cdot \eta \defeq {}_A \inpro{\xi}{\eta^*}\) and
  \(\A_{(1,0)} \times \A_{(0,1)} \to \A_{(1,1)}\) is defined by
  \(\eta \cdot \xi \defeq \inpro{\eta^*}{\xi}_B\), where \(\xi \in X\)
  and \(\eta \in X^* \). This Fell bundle is unsaturated unless \(X\)
  is an \emph{imprimitivity bimodule}.  Let \(L\) be the linking
  algebra associated with the Hilbert bimodule~\(X\) defined by Brown,
  Mingo and
  Shen~\cite{Brown-Mingo-Shen1994Quasi-mult-enbed-Hilbert-mod}.
  Define \(\phi \colon \mathrm{C}(G;\A) \to L\) by
  \[
    \phi(f) = \begin{pmatrix}
      f(0,0) & f(0,1) \\
      f(1,0) & f(1,1)
    \end{pmatrix}
  \]
  for \(f\in \mathrm{C}(G;\A)\). Then \(\phi\) is a bijective
  \(^*\)\nb-homomorphism. Since \(L\) is a \(\Cst\)\nb-algebra, by the
  uniqueness of the \(\Cst\)\nb-norm \(\Cst(G;\A) \iso L\). This
  example is an \emph{unsaturated} version of~Kumjian's
  example~\cite[Example 3.5(ii)]{Kumjian1998Fell-bundles-over-gpd}.
\end{example}

\begin{example}[Fell bundle associated with partial
  actions]\label{exa:part-act}
  One may associate Fell bundles to partial actions of groups or
  groupoids; and these Fell bundles are the prototypical examples of
  unsaturated Fell bundles. Assume that a discrete group~ \(\Gamma\)
  acts partially on a unital \(\Cst\)\nb-algebra \(A\)
  (see~\cite[Chapter~11]{Exel2017PDS-Fell-bundles}). Then Exel
  proved~\cite[Chapter 16]{Exel2017PDS-Fell-bundles} that the Fell
  bundle \(\B\) associated with the partial action is an
  \emph{unsaturated} one. Moreover, the \(\Cst\)\nb-algebra of the
  Fell bundle \(\B\) is naturally isomorphic to the partial crossed
  product \(A\rtimes \Gamma\) (Proposition~16.28
  of~\cite{Exel2017PDS-Fell-bundles}).

  Anantharaman-Delaroche generalises Exel's idea of constructing a
  Fell bundle out of group partial action~\cite[Chapter
  16]{Exel2017PDS-Fell-bundles} to groupoids
  in~\cite[Section~4]{Anantharaman-D:2020Partial-Action-of-Gpd}. In
  particular, let \(G\) be a locally compact Hausdorff \etale\
  groupoid with unit space \(\base\). Let~\(A\) be a
  \(\Contz(\base)\)\nb-algebra on which~\(G\) is acting
  partially. In~\cite[Section~4]{Anantharaman-D:2020Partial-Action-of-Gpd},
  Anantharaman-Delaroche
  defines the Fell
  bundle \(p\colon \B\to G\) associated with this action. She
  constructs~\(\Cst(G;\B)\) using the \(I\)\nb-norm bounded
  representation. Since we have assumed~\(G\) to be {\etale},
  her~\(\Cst(G;\B)\) is same as the Fell bundle \(\Cst\)\nb-algebra
  Theorem~\ref{thm:norm-equi} leads to. As Anantharaman-Delaroche
  notices~\cite[Remark~4.5]{Anantharaman-D:2020Partial-Action-of-Gpd},
  the Fell bundle~\(\Cst\)\nb-algebra is isomorphic to the
  \(\Cst\)\nb-algebra of the partial action~\(G\curvearrowright
  A\). One can note that her construction and last remark hold even if
  the \etale{} groupoid~\(G\) is not Hausdorff as in our sense.
\end{example}

\begin{bibdiv}
	\begin{biblist}
		
		\bib{Anantharaman-D:2020Partial-Action-of-Gpd}{unpublished}{
			author={Anantharaman-Delaroche, Claire},
			title={Partial actions of groupoids},
			date={2020},
			url={https://hal.science/hal-02429386},
			note={working paper or preprint, hal-02429386},
		}
		
		\bib{Brown-Mingo-Shen1994Quasi-mult-enbed-Hilbert-mod}{article}{
			author={Brown, Lawrence~G.},
			author={Mingo, James~A.},
			author={Shen, Nien-Tsu},
			title={Quasi-multipliers and embeddings of {H}ilbert
				{$C^\ast$}-bimodules},
			date={1994},
			ISSN={0008-414X},
			journal={Canad. J. Math.},
			volume={46},
			number={6},
			pages={1150\ndash 1174},
			url={https://doi.org/10.4153/CJM-1994-065-5},
			review={\MR{1304338}},
		}
		
		\bib{Brown-Ozawa2008Cst-alg-and-finite-dim-approximations}{book}{
			author={Brown, Nathanial~P.},
			author={Ozawa, Narutaka},
			title={{$C^*$}-algebras and finite-dimensional approximations},
			series={Graduate Studies in Mathematics},
			publisher={American Mathematical Society, Providence, RI},
			date={2008},
			volume={88},
			ISBN={978-0-8218-4381-9; 0-8218-4381-8},
			url={http://dx.doi.org/10.1090/gsm/088},
			review={\MR{2391387 (2009h:46101)}},
		}
		
		\bib{Clark-Zimmerman2022A-steinberg-Appro-to-etale-gpd-alg}{article}{
			author={Clark, Lisa~Orloff},
			author={Zimmerman, Joel},
			title={A {S}teinberg algebra approach to {{\'e}tale} groupoid
				{$\mathrm{C}^*$}-algebras},
			date={2023},
			note={Preprint- arXiv:2203.00179v1},
		}
		
		\bib{Connes1982A-Survey-of-folliantion-and-OP-Alg}{incollection}{
			author={Connes, A.},
			title={A survey of foliations and operator algebras},
			date={1982},
			booktitle={Operator algebras and applications, {P}art 1 ({K}ingston, {O}nt.,
				1980)},
			series={Proc. Sympos. Pure Math.},
			volume={38},
			publisher={Amer. Math. Soc., Providence, RI},
			pages={521\ndash 628},
			review={\MR{679730}},
		}
		
		\bib{Davidson1996Cst-by-Examples}{book}{
			author={Davidson, Kenneth~R.},
			title={{$C^*$}-{A}lgebras by example},
			series={Fields Institute Monographs},
			publisher={American Mathematical Society, Providence, RI},
			date={1996},
			volume={6},
			ISBN={0-8218-0599-1},
			review={\MR{1402012 (97i:46095)}},
		}
		
		\bib{Dixmier1977Cst-Alg-Enlglish}{book}{
			author={Dixmier, Jacques},
			title={{$C\sp*$}-{A}lgebras},
			publisher={North-Holland Publishing Co., Amsterdam-New York-Oxford},
			date={1977},
			ISBN={0-7204-0762-1},
			note={Translated from the French by Francis Jellett, North-Holland
				Mathematical Library, Vol. 15},
			review={\MR{0458185 (56 \#16388)}},
		}
		
		\bib{Exel1997Twisted-Partial-Acts-Fell-Bundle}{article}{
			author={Exel, Ruy},
			title={Twisted partial actions: a classification of regular
				{$C^*$}-algebraic bundles},
			date={1997},
			ISSN={0024-6115},
			journal={Proc. London Math. Soc. (3)},
			volume={74},
			number={2},
			pages={417\ndash 443},
			url={https://doi.org/10.1112/S0024611597000154},
			review={\MR{1425329}},
		}
		
		\bib{Exel2008Inverse-semigroup-combinotorial-Cst-alg}{article}{
			author={Exel, Ruy},
			title={Inverse semigroups and combinatorial {$C^\ast$}-algebras},
			date={2008},
			ISSN={1678-7544},
			journal={Bull. Braz. Math. Soc. (N.S.)},
			volume={39},
			number={2},
			pages={191\ndash 313},
			url={https://doi.org/10.1007/s00574-008-0080-7},
			review={\MR{2419901}},
		}
		
		\bib{Exel2017PDS-Fell-bundles}{book}{
			author={Exel, Ruy},
			title={Partial dynamical systems, {F}ell bundles and applications},
			series={Mathematical Surveys and Monographs},
			publisher={American Mathematical Society, Providence, RI},
			date={2017},
			volume={224},
			ISBN={978-1-4704-3785-5},
			review={\MR{3699795}},
		}
		
		\bib{Fell1988Representation-of-Star-Alg-Banach-bundles}{book}{
			author={Fell, J. M.~G.},
			author={S., Doran~R.},
			title={Representation ${^\ast}$\nobreakdash-algebras, locally compact
				groups, and {B}anach ${^\ast}$\nobreakdash-algebraic bundles, vol. 1 and 2
				algebraic bundles},
			series={Pure and Applied Mathematics},
			publisher={Academic Press, New York},
			date={1988},
		}
		
		\bib{Hofmann1977Bundles-and-sheaves-equi-in-Cat-Ban}{inproceedings}{
			author={Hofmann, Karl~Heinrich},
			title={Bundles and sheaves are equivalent in the category of {B}anach
				spaces},
			date={1977},
			booktitle={{$K$}-theory and operator algebras ({P}roc. {C}onf., {U}niv.
				{G}eorgia, {A}thens, {G}a., 1975)},
			pages={53\ndash 69. Lecture Notes in Math., Vol. 575},
			review={\MR{0487491}},
		}
		
		\bib{Jensen-Thomsen1991KK-theory-book}{book}{
			author={Jensen, Kjeld~Knudsen},
			author={Thomsen, Klaus},
			title={Elements of {$KK$}-theory},
			series={Mathematics: Theory \& Applications},
			publisher={Birkh\"{a}user Boston, Inc., Boston, MA},
			date={1991},
			ISBN={0-8176-3496-7},
			url={https://doi.org/10.1007/978-1-4612-0449-7},
			review={\MR{1124848}},
		}
		
		\bib{Kumjian1998Fell-bundles-over-gpd}{article}{
			author={Kumjian, Alex},
			title={Fell bundles over groupoids},
			date={1998},
			ISSN={0002-9939},
			journal={Proc. Amer. Math. Soc.},
			volume={126},
			number={4},
			pages={1115\ndash 1125},
			url={http://dx.doi.org/10.1090/S0002-9939-98-04240-3},
			review={\MR{1443836 (98i:46055)}},
		}
		
		\bib{Lance1995Hilbert-modules}{book}{
			author={Lance, E.~C.},
			title={Hilbert {$C^*$}-modules},
			series={London Mathematical Society Lecture Note Series},
			publisher={Cambridge University Press, Cambridge},
			date={1995},
			volume={210},
			ISBN={0-521-47910-X},
			url={http://dx.doi.org/10.1017/CBO9780511526206},
			note={A toolkit for operator algebraists},
			review={\MR{1325694 (96k:46100)}},
		}
		
		\bib{Muhly-Williams2008FellBundle-ME}{article}{
			author={Muhly, Paul~S.},
			author={Williams, Dana~P.},
			title={Equivalence and disintegration theorems for {F}ell bundles and
				their {$C^*$}-algebras},
			date={2008},
			ISSN={0012-3862},
			journal={Dissertationes Math. (Rozprawy Mat.)},
			volume={456},
			pages={1\ndash 57},
			url={http://dx.doi.org/10.4064/dm456-0-1},
			review={\MR{2446021 (2010b:46146)}},
		}
		
		\bib{Murphy-Book}{book}{
			author={Murphy, Gerard~J.},
			title={{$C^*$}-algebras and operator theory},
			publisher={Academic Press, Inc., Boston, MA},
			date={1990},
			ISBN={0-12-511360-9},
			review={\MR{1074574}},
		}
		
		\bib{PatersonA1999Gpd-InverseSemigps-Operator-Alg}{book}{
			author={Paterson, Alan L.~T.},
			title={Groupoids, inverse semigroups, and their operator algebras},
			series={Progress in Mathematics},
			publisher={Birkh\"auser Boston, Inc., Boston, MA},
			date={1999},
			volume={170},
			ISBN={0-8176-4051-7},
			url={http://dx.doi.org/10.1007/978-1-4612-1774-9},
			review={\MR{1724106 (2001a:22003)}},
		}
		
		\bib{Renault1980Gpd-Cst-Alg}{book}{
			author={Renault, Jean},
			title={A groupoid approach to {$C^{\ast} $}-algebras},
			series={Lecture Notes in Mathematics},
			publisher={Springer, Berlin},
			date={1980},
			volume={793},
			ISBN={3-540-09977-8},
			review={\MR{584266 (82h:46075)}},
		}
		
		\bib{Renault1985Representations-of-crossed-product-of-gpd-Cst-Alg}{article}{
			author={Renault, Jean},
			title={Repr\'esentation des produits crois\'es d'alg\`ebres de
				groupo\"\i des},
			date={1987},
			ISSN={0379-4024},
			journal={J. Operator Theory},
			volume={18},
			number={1},
			pages={67\ndash 97},
			review={\MR{912813 (89g:46108)}},
		}
		
		\bib{Sims2017Etale-gpd}{article}{
			author={Sims, Aidan},
			title={\'etale groupoids and their $c^\ast$-algebras},
			date={201710},
			note={Preprint- arXiv:1710.10897v2},
		}
		
		\bib{Sims-Szabo-Williams2020Operator-alg-book}{book}{
			author={Sims, Aidan},
			author={Szab\'{o}, G\'{a}bor},
			author={Williams, Dana},
			title={Operator algebras and dynamics: groupoids, crossed products, and
				{R}okhlin dimension},
			series={Advanced Courses in Mathematics. CRM Barcelona},
			publisher={Birkh\"{a}user/Springer, Cham},
			date={[2020] \copyright 2020},
			ISBN={978-3-030-39712-8; 978-3-030-39713-5},
			url={https://doi.org/10.1007/978-3-030-39713-5},
			note={Lecture notes from the Advanced Course held at Centre de Recerca
				Matem\`atica (CRM) Barcelona, March 13--17, 2017, Edited by Francesc Perera},
			review={\MR{4321941}},
		}
		
		\bib{Williams2007Crossed-product-Cst-Alg}{book}{
			author={Williams, Dana~P.},
			title={Crossed products of {$C{^\ast}$}-algebras},
			series={Mathematical Surveys and Monographs},
			publisher={American Mathematical Society, Providence, RI},
			date={2007},
			volume={134},
			ISBN={978-0-8218-4242-3; 0-8218-4242-0},
			url={http://dx.doi.org/10.1090/surv/134},
			review={\MR{2288954 (2007m:46003)}},
		}
		
		\bib{Yamagami1990On-primitive-ideal-spac-gpd}{incollection}{
			author={Yamagami, Shigeru},
			title={On primitive ideal spaces of {$C^*$}-algebras over certain
				locally compact groupoids},
			date={1990},
			booktitle={Mappings of operator algebras ({P}hiladelphia, {PA}, 1988)},
			series={Progr. Math.},
			volume={84},
			publisher={Birkh\"{a}user Boston, Boston, MA},
			pages={199\ndash 204},
			review={\MR{1103378}},
		}
		
	\end{biblist}
\end{bibdiv}

\end{document}